\documentclass[12pt,reqno]{article}

\usepackage[usenames]{color}
\usepackage{amssymb}
\usepackage{graphicx}
\usepackage{amscd}

\usepackage[colorlinks=true,
linkcolor=webgreen,
filecolor=webbrown,
citecolor=webgreen]{hyperref}

\definecolor{webgreen}{rgb}{0,.5,0}
\definecolor{webbrown}{rgb}{.6,0,0}

\usepackage{color}
\usepackage{fullpage}
\usepackage{float}

\usepackage{psfig}
\usepackage{graphics,amsmath,amssymb}
\usepackage{amsthm}
\usepackage{amsfonts}
\usepackage{latexsym}
\usepackage{epsf}

\usepackage{algorithmicx}
\usepackage{algpseudocode}
\usepackage{tikz}
\usetikzlibrary{automata,positioning}

\setlength{\textwidth}{6.5in}
\setlength{\oddsidemargin}{.1in}
\setlength{\evensidemargin}{.1in}
\setlength{\topmargin}{-.1in}
\setlength{\textheight}{8.4in}

\newcommand{\seqnum}[1]{\href{http://oeis.org/#1}{\underline{#1}}}

\begin{document}


\theoremstyle{plain}
\newtheorem{theorem}{Theorem}
\newtheorem{corollary}[theorem]{Corollary}
\newtheorem{lemma}[theorem]{Lemma}
\newtheorem{proposition}[theorem]{Proposition}

\theoremstyle{definition}
\newtheorem{definition}[theorem]{Definition}
\newtheorem{example}[theorem]{Example}
\newtheorem{conjecture}[theorem]{Conjecture}

\theoremstyle{remark}
\newtheorem{remark}[theorem]{Remark}

\begin{center}
\vskip 1cm{\LARGE\bf 
Abelian Complexity Function of the \\
\vskip .11in
Tribonacci Word
}
\vskip 1cm
\large
Ond\v{r}ej~Turek \\
Nuclear Physics Institute \\
Academy of Sciences of the Czech Republic \\
250 68 \v{R}e\v{z} \\
Czech Republic \\
and \\
Bogolyubov Laboratory of Theoretical Physics \\
Joint Institute for Nuclear Research \\
141980 Dubna \\
Russia \\
\href{mailto:o.turek@ujf.cas.cz}{\tt o.turek@ujf.cas.cz} \\
\end{center}

\vskip .2 in

\def\N{\mathbb N}
\def\A{\mathcal A}
\def\Codec{\mathrm{Dec}}
\def\Z{\mathcal Z}
\def\D{\mathcal D}
\def\P{\mathcal P}
\def\u{\mathbf u}
\def\t{\mathbf t}
\def\q{\mathbf q}
\def\Prel{\mathcal{P}^{\mathrm{rel}}}
\def\AC{\rho^{\mathrm{ab}}}
\def\Vect{\mathrm{Vect}}

\begin{abstract}
According to a result of Richomme, Saari and Zamboni, the 
abelian complexity of the Tribonacci word satisfies $\rho^{\mathrm{ab}}(n)\in\{3,4,5,6,7\}$ for each $n\in\mathbb{N}$. In this paper we derive an automaton that evaluates the function $\rho^{\mathrm{ab}}(n)$ explicitly. The automaton takes the Tribonacci representation of $n$ as its input; therefore, $(\rho^{\mathrm{ab}}(n))_{n\in\N}$ is an automatic sequence in a generalized sense. Since our evaluation of $\rho^{\mathrm{ab}}(n)$ uses $\mathcal{O}(\log n)$ operations, it is fast even for large values of $n$. Our result also leads to a solution of an open problem proposed by Richomme et al. concerning the characterization of 
those $n$ for which $\rho^{\mathrm{ab}}(n)=c$ with $c$ belonging to $\{4,5,6,7\}$. In addition, we apply the same approach on the $4$-bonacci word. In this way we find a description of the abelian complexity of the $4$-bonacci word, too.
\end{abstract}

\section{Introduction}

The {\it abelian complexity\/} of a word $\u$ is a function $\N\to\N$ that counts the number of pairwise non-abelian-equivalent factors of $\u$ of length $n$.
The term was introduced by Richomme, Saari and Zamboni \cite{RSZ} in 2009, and since then it has been extensively studied~\cite{ABJS,BBT,CRSZ,CR,MR,Tu2,Tu13}. 
In one of the first papers on the subject, Richomme, Saari and Zamboni~\cite{RSZ2} examined the Tribonacci word $\t$ \seqnum{A080843}, which is the fixed point of the substitution $0\mapsto01$, $1\mapsto02$, $2\mapsto0$, and they showed that $\AC_\t(n)\in\{3,4,5,6,7\}$ for all $n$. They also characterized those
$n$ for which $\AC_\t(n)=3$, and proposed the following open problem: 
{\it for each $c\in\{4,5,6,7\}$,
characterize those $n$ for which $\AC_\t(n)=c$}.

Explicit characterization of $\AC_\u(n)$ of a given infinite word $\u$ is generally a difficult task, particularly in case of words defined over alphabets consisting of more than two letters.
For example, despite the fact that a recurrent word over a ternary alphabet with constant abelian complexity equal to $3$ for all $n\in\N$ has been already constructed~\cite{RSZ}, there seems to be no other nontrivial example to date of a recurrent $m$-ary word with $m\geq3$ whose abelian complexity function has been precisely determined. In particular, the problem of precise characterization of the abelian complexity $\AC_\t(n)$ of the Tribonacci word $\t$, which is a ternary word, has remained open since 2009.

Recently Mousavi and Shallit~\cite{MS} showed that many properties of the Tribonacci word, such as the aperiodicity, powers, palindromes etc., could be examined purely mechanically with the help of finite automata. Although
in principle their method could also be used 
for the study of the characteristics of the abelian complexity function of the Tribonacci word, it turns out to be not computationally feasible. In this paper we propose a related method that is particularly
designed for dealing with abelian properties (primarily with abelian complexity and the balance properties). Our approach is less general than the one of Mousavi and Shallit, but it is efficient enough to explicitly obtain a finite automaton that computes
the function $\AC_\t(n)$. The automaton in question features a very small set of states, consisting of less than $70$ elements. Consequently, it can be easily implemented on any computer (a powerful machine is not needed), and the calculation can be even performed by hand. The automaton takes the Tribonacci representation of $n$ as its input, which means that the sequence $(\AC_t(n))_{n\in\N}$ \seqnum{A216190} is $T$-automatic (or ``Tribonacci-automatic'') in the sense of Shallit~\cite{AS,Sh88}.

Our approach relies on the technique of abelian co-decomposition~\cite{Tu13}, which was originally developed as a tool for proving that the abelian complexity of a recurrent word attains a certain value infinitely often. As a result, our construction of the automaton can be generalized to certain other words as well.

The paper is organized as follows. In Section~\ref{Sect.Preliminaries}
we provide the necessary notation related to abelian complexity, the
Tribonacci word and finite automata. Section~\ref{Sect.Codecomposition}
summarizes basic facts about abelian co-decomposition.
Section~\ref{Sect.Z} contains the core of the paper: we show that there
exists a finite number of sets $\Z_1,\ldots,\Z_M$ such that each
$n\in\N$ can be associated with a certain $\Z_q$ via the Tribonacci
representation of $n$.  Since each of those sets is related to a
certain value of the abelian complexity, the indices $1,\ldots,M$ can
be regarded as states of a finite automaton that evaluates
$\AC_\t(n)$.  Section~\ref{Sect.Range} is devoted to the first and easy
application of the findings from Section~\ref{Sect.Z}: we demonstrate
that the abelian complexity of the Tribonacci word takes values in
$\{3,4,5,6,7\}$.  Although this fact is already known~\cite{RSZ2}, it
illustrates the applicability of our approach.  The main result of the
paper is presented in Section~\ref{Sect.Eval.AC}. We derive a formula
for evaluating the abelian complexity of the Tribonacci word on the
basis of results of Section~\ref{Sect.Z}. In particular, we show that
the abelian complexity can be calculated by a finite automaton with
$278$ states. This result is further improved in
Section~\ref{Sect.Simplified}, where the size of the automaton is
reduced from $278$ to $68$ states.  It is easy to transform this
automaton into an automaton that decides, for any $n\in\N$, whether
$\AC_\t(n)$ attains a given value $c\in\{3,4,5,6,7\}$, which provides a
solution of the problem of Richomme et al. In Section~\ref{Sect.mbon},
we demonstrate that the method allows one to examine the abelian
complexity function of other $m$-bonacci words. In particular, we
present results on the $4$-bonacci word; they show that the
abelian complexity of $m$-bonacci words gains new properties when $m$
exceeds $3$. The paper is concluded by Section~\ref{Sect.Conclusions},
in which we discuss other applications and generalizations of the
method.

\section{Preliminaries}\label{Sect.Preliminaries}

Let us consider a set $\A=\{0,1,2,\ldots,m-1\}$ (\emph{alphabet}) consisting of $m$ symbols (\emph{letters}) $0,1,\ldots,m-1$.
Concatenations of letters from $\A$ are called \emph{words}. Let $\A^*$ denote the free monoid of all finite words over $\A$ including the empty word $\varepsilon$. The \emph{length} of a word $w=w_0w_1w_2\cdots w_{n-1}\in\A^*$ is the number of its letters, $|w|=n$. The symbol $|w|_\ell$ for $\ell\in\A$ and $w\in\A^*$ stands for the number of occurrences of the letter $\ell$ in the word $w$.

The set of all infinite words over $\A$ is denoted by $\A^\N$. We say that an infinite word $\u$ is \emph{recurrent} if every factor of $\u$ occurs infinitely many times in $\u$.

A finite word $w$ is a \emph{factor} of a (finite or infinite) word $\u$ if there exists a finite word $x$ and a (finite or infinite, respectively) word $y$ such that $\u=xwy$. If $x=\varepsilon$, the factor $w$ is called
a \emph{prefix} of $\u$.

For any word $w\in\A^*$ and $k\in\N$ we write $w^k=\overbrace{ww\cdots w}^{k \text{ times}}$. Similarly, we set $w^0=\varepsilon$. If a word $v\in\A^\N$ has the prefix $w^k$ for $k\in\N$, then the symbol $w^{-k}v$ stands for the word satisfying $w^kw^{-k}v=v$.

The \emph{Parikh vector} of $w$ is the $m$-tuple $\Psi(w)=(|w|_0,|w|_1,\ldots,|w|_{m-1})$; note that $|w|_0+|w|_1+\cdots+|w|_{m-1}=|w|$.
For any given infinite word $\u$, let $\mathcal{P}_\u(n)$ denote the set of all Parikh vectors corresponding to factors of $\u$ having the length $n$, i.e.,
$$
\mathcal{P}_\u(n)=\left\{\Psi(w)\,\left|\,\text{$w$ is a factor of $\u$}, |w|=n\right.\right\}.
$$
The \emph{abelian complexity} of a word $\u$ is the function $\AC_\u:\N\to\N$ defined as
\begin{equation}\label{AC}
\AC_\u(n)=\#\P_\u(n),
\end{equation}
where $\#$ denotes the cardinality.

It is useful to introduce the \emph{relative Parikh vector}~\cite{Tu13}, which is defined for any factor $w$ of $\u$ of length $n$ as
$$
{\Psi}_\u^\mathrm{rel}(w)=\Psi(w)-\Psi(\u_{[n]})\,,
$$
where $\u_{[n]}$ is the prefix of $\u$ of length $n$.
Since the subtrahend $\Psi(\u_{[n]})$ depends only on the length of $w$ and not on $w$ itself, the set of relative Parikh vectors corresponding to the length $n$,
$$
\Prel_\u(n):=\left\{\left.{\Psi}_\u^\mathrm{rel}(w)\;\right|\;\text{$w$ is a factor of $\u$}, |w|=n\right\},
$$
has the same cardinality as $\P_\u(n)$. Hence we obtain, with regard to~\eqref{AC},
\begin{equation}\label{ACrel}
\AC_\u(n)=\#\Prel_\u(n)\,.
\end{equation}

An infinite word $\u$ is said to be \emph{$b$-balanced} for a certain $b\in\N$ if for every $\ell\in\A$ and for every pair of factors $v$, $w$ of $\u$ such that $|v|=|w|$, the inequality
$\left||v|_\ell-|w|_\ell\right|\leq b$
holds.
If $\u$ is a $b$-balanced word, the components of relative Parikh vectors are bounded by $b$~\cite{Tu13}. Therefore, the set of all relative Parikh vectors $\bigcup_{n\in\N}\Prel_\u(n)$ is finite for any $b$-balanced word $\u$.

This paper is primarily concerned with the \emph{Tribonacci word} $\t$, which is defined over the alphabet $\A=\{0,1,2\}$ as the fixed point of the substitution
\begin{equation}\label{Trib.subst}
\begin{array}{lccl}
\varphi_\t: & 0 & \mapsto & 01 \\
& 1 & \mapsto & 02 \\
& 2 & \mapsto & 0
\end{array}
\end{equation}
i.e.,
$$
\t=\lim_{k\to\infty}\varphi_\t^k(0)=01020100102010102010010201020100102010102010\cdots\,.
$$
It is easy to check that $\varphi_\t^{j}(0)=\varphi_\t^{j-1}(0)\varphi_\t^{j-2}(0)\varphi_\t^{j-3}(0)$ for every $j\geq3$. Hence, the lengths of factors $\varphi_\t^{j}(0)$ satisfy the recurrence relation $|\varphi_\t^{j}(0)|=|\varphi_\t^{j-1}(0)|+|\varphi_\t^{j-2}(0)|+|\varphi_\t^{j-3}(0)|$. Comparing this relation with the Tribonacci recurrence relation $T_j=T_{j-1}+T_{j-2}+T_{j-3}$, we conclude that $|\varphi_\t^j(0)|=T_{j+3}$ for every $j\in\N\cup\{0\}$, where $\left(T_j\right)_{j\geq0}=(0,0,1,1,2,4,7,\ldots)$ is the sequence of Tribonacci 
numbers \seqnum{A000073}.
Any $n\in\N$ can be written as a sum of Tribonacci numbers with binary coefficients,
\begin{equation}\label{TribExp}
n=\sum_{j=0}^{k} d_j T_{j+3} \qquad \text{for}\quad d_j\in\{0,1\},\; k\in\N\cup\{0\}\,.
\end{equation}
If coefficients $d_j\in\{0,1\}$ are obtained by the greedy algorithm, they form the \emph{normal $T$-representation} (also called the \emph{Tribonacci representation}) of $n$, which we denote by the symbol $\langle n \rangle_T$:
\begin{equation}\label{Trep}
\langle n \rangle_T=d_k d_{k-1}\cdots d_1d_0\,.
\end{equation}
For $n=0$, we have $\langle 0 \rangle_T=\varepsilon$.
Table~\ref{T expansions} shows normal $T$-representations of several small integers.
\begin{table}
\begin{center}
\begin{tabular}{|c|r||c|r||c|r||c|r||c|r|}
\hline
$n$ & $\langle n \rangle_T$ & $n$ & $\langle n \rangle_T$ & $n$ & $\langle n \rangle_T$ & $n$ & $\langle n \rangle_T$ & $n$ & $\langle n \rangle_T$ \\
\hline
$1$ & $1$ & $4$ & $100$ & $7$ & $1000$ & $10$ & $1011$ & $13$ & $10000$ \\
$2$ & $10$ & $5$ & $101$ & $8$ & $1001$ & $11$ & $1100$ & $14$ & $10001$ \\
$3$ & $11$ & $6$ & $110$ & $9$ & $1010$ & $12$ & $1101$ & $15$ & $10010$ \\
\hline
\end{tabular}
\end{center}
\caption{Normal $T$-representations of the numbers $1,\ldots,15$.}
\label{T expansions}
\end{table}
The constant $k$ in expansion~\eqref{Trep} does not need to be chosen minimal, i.e., a normal $T$-representation can start with a block of zeros. For example,
the representations $\langle n \rangle_T=011$ and $\langle n \rangle_T=00011$ are both equivalent to $\langle n \rangle_T=11$ and correspond to $n=3$.

The substitution~\eqref{Trib.subst} is a special case of a \emph{simple Parry substitution}, defined over the alphabet $\A=\{0,1,\ldots,m-1\}$ in the way
\begin{equation}\label{simpleParry}
\begin{array}{lccl}
\varphi: & 0 & \mapsto & 0^{\alpha_0}1 \\
& 1 & \mapsto & 0^{\alpha_1}2 \\
&  & \vdots & \\
& m-2 & \mapsto & 0^{\alpha_{m-2}}(m-1) \\
& m-1 & \mapsto & 0^{\alpha_{m-1}}
\end{array}
\end{equation}
with $\alpha_i\in\N\cup\{0\}$ satisfying the conditions $\alpha_0\geq1$ and $\alpha_\ell\leq \alpha_0$ for all $\ell\in\A$ \cite{Fa,Pa}. We call the fixed point of~\eqref{simpleParry} a \emph{simple Parry word}; in this sense the Tribonacci word is an example of a simple Parry word. Simple Parry words appear in nonstandard numeration systems. Without going into details, let us mention here that the order of letters in the fixed point of~\eqref{simpleParry} corresponds to the order of lengths of gaps between so-called $\beta$-integers for $\beta>1$ being a zero of the polynomial $\alpha_{m-1}x^{m-1}+\alpha_{m-2}x^{m-2}+\cdots+\alpha_1 x+\alpha_0$~\cite{Th89}. Since all simple Parry substitutions have common structure, the combinatorial properties of their fixed points can be often examined in a similar way. In particular, the approach we are going to apply in this paper is based on a method that can handle fixed points of all substitutions of type~\eqref{simpleParry}. Therefore, it is convenient here to formulate the representation~\eqref{Trep} more generally. Consider a simple Parry substitution $\varphi$, and
set $U_j=|\varphi^j(0)|$ for every $j\in\N\cup\{0\}$. 
Any $n\in\N$ can be represented as a sum
$$
n=\sum_{j=0}^{k} d_j U_j
$$
with integer coefficients $d_j$.
If coefficients $d_j$ are obtained by the greedy algorithm, the sequence $d_kd_{k-1}\cdots d_1d_0$ is called the \emph{normal $U$-representation} of $n$ \cite{Lo} and denoted
\begin{equation}\label{Urep}
\langle n \rangle_U=d_kd_{k-1}\cdots d_1d_0\,.
\end{equation}
It can be shown that the coefficients in~\eqref{Urep} satisfy $d_j\in\{0,1,\ldots,\alpha_0\}$ for all $j=0,1,\ldots,k$.

A \emph{deterministic finite automaton with output} (DFAO)~\cite{AS} is an extension of the deterministic finite automaton (DFA) model. A DFAO is defined as a $6$-tuple $(Q,\Sigma,\delta,q_0,\Delta,\tau)$, where $Q$ is a finite set of states, $\Sigma$ is the finite input alphabet, $\delta:Q\times\Sigma\to Q$ is the transition function, $q_0$ is the initial state, $\Delta$ is the output alphabet, and $\tau:Q\to\Delta$ is the output function. If we extend the domain of $\delta$ to $Q\times\Sigma^*$ by defining $\delta(q,\epsilon)=q$ for all $q\in Q$, and $\delta(q,xa)=\delta(\delta(q,x),a)$ for all $q\in Q$, $x\in\Sigma^*$ and $a\in\Sigma$, a DFAO defines a function $f:\Sigma^*\to\Delta$, given as
$$
f(w)=\tau(\delta(q_0,w)) \qquad \text{for $w\in\Sigma^*$}.
$$
Let $[n]_k$ denote the representation of $n\in\N$ in base $k$ for a certain integer $k\geq2$. A sequence $(a_n)_{n\in\N}$ over a finite alphabet $\Delta$ is called \emph{$k$-automatic} if there exists a DFAO with $\Sigma=\{0,1,\ldots,k-1\}$ such that $a_n=\tau(\delta(q_0,[n]_k))$ for all $n\in\N$.

Shallit~\cite{Sh88} introduced the concept of generalized automatic sequences, which are generated by automata using nonstandard representations instead of ordinary base-$k$ representations.
In particular, we say that a sequence $(a_n)_{n\in\N}$ with values in a finite alphabet $\Delta$ is \emph{$U$-automatic} if there exists a DFAO $(Q,\Sigma,\delta,q_0,\Delta,\tau)$ such that
$$
a_n=\tau(\delta(q_0,\langle n\rangle_U)) \qquad \text{for all $n\in\N$},
$$
where $\langle n\rangle_U$ is the normal $U$-representation defined above.

\section{Abelian co-decomposition}\label{Sect.Codecomposition}

Abelian co-decomposition, which we briefly summarize in this section, has been developed as a tool for calculating $\AC_\u(n)$ of recurrent words~\cite{Tu13}. The main idea is roughly the following: the method uses the normal $U$-representation $\langle n \rangle_U$ to associate any $n\in\N$ with a certain set $\Z_\u(n)$ of pairs of factors. At the same time the structure of the set $\Z_\u(n)$ is designed in a way that allows to find quickly the set of relative Parikh vectors $\Prel_\u(n)$, and, consequently, to obtain the value $\AC_\u(n)$ by formula~\eqref{ACrel}.

Let $v,w$ be any factors of $\u$ such that $\Psi(v)=\Psi(w)$ (in particular, $|v|=|w|$). We factorize them as follows:
\begin{equation}\label{rozklad}
\begin{array}{ccc}
v&=&z_0\:z_1\:z_2\:\cdots\:z_h \\
w&=&\tilde{z}_0\:\tilde{z}_1\:\tilde{z}_2\:\cdots\:\tilde{z}_h
\end{array}
\end{equation}
where $z_0,z_1,\ldots,z_h$ and $\tilde{z}_0,\tilde{z}_1,\ldots,\tilde{z}_h$ are non-empty words satisfying $\Psi(\tilde{z}_j)=\Psi(z_j)$ for all $j\in\{0,1,\ldots,h\}$. The set of pairs
\begin{equation}\label{codec}
\left\{\begin{pmatrix}z_0\\ \tilde{z}_0\end{pmatrix},\begin{pmatrix}z_1\\ \tilde{z}_1\end{pmatrix},\begin{pmatrix}z_2\\ \tilde{z}_2\end{pmatrix},\cdots,\begin{pmatrix}z_h\\ \tilde{z}_h\end{pmatrix}\right\}
\end{equation}
is called the \emph{abelian co-decomposition} of the pair $\bigl(\begin{smallmatrix}v\\ w\end{smallmatrix}\bigr)$.

An abelian co-decomposition~\eqref{codec} exists for any $v,w$ such that $\Psi(v)=\Psi(w)$, because one can take, e.g., $\left\{\bigl(\begin{smallmatrix}v\\ w\end{smallmatrix}\bigr)\right\}$. The decomposition~\eqref{rozklad} is in general not unique (see Example~\ref{Ex.Codec} below), but it can be made unique by an additional requirement. Here we will adopt, throughout the whole paper, the following convention: \emph{The number $h$ in equation~\eqref{rozklad} is chosen to be maximal.} The abelian co-decomposition satisfying this requirement will be denoted $\Codec\bigl(\begin{smallmatrix}v\\ w\end{smallmatrix}\bigr)$.

\begin{example}\label{Ex.Codec}
Let $v=0102$, $w=1020$. There exist two possible decompositions~\eqref{rozklad}:
$$
\begin{array}{ccc}
v &=& \overbrace{0102}^{z_0} \\
w &=& \underbrace{1020}_{\tilde{z}_0}
\end{array}
\qquad\text{or}\qquad
\begin{array}{ccc}
v &=& \overbrace{01}^{z_0}\,\overbrace{02}^{z_1} \\
w &=& \underbrace{10}_{\tilde{z}_0}\,\underbrace{20}_{\tilde{z}_1}
\end{array}
$$
Hence the abelian co-decomposition of the pair $\bigl(\begin{smallmatrix}0102\\1020\end{smallmatrix}\bigr)$, obeying our convention of maximality of number of elements, is
$$
\Codec\begin{pmatrix}v\\ w\end{pmatrix}=\left\{\begin{pmatrix}01\\10\end{pmatrix},\begin{pmatrix}02\\20\end{pmatrix}\right\}.
$$
\end{example}

For any pair $\bigl(\begin{smallmatrix}v\\ w\end{smallmatrix}\bigr)$ of factors of $\u$ such that $|v|=|w|$, we introduce the following set of vectors:
\begin{equation}\label{P}
\Vect\begin{pmatrix}v\\w\end{pmatrix}:=\left\{\Psi(s)-\Psi(r)\;\left|\;\text{$r$ is a prefix of $v$, $s$ is a prefix of $w$, $|s|=|r|$}\right.\right\}\,.
\end{equation}
\begin{example}\label{Ex.P}
\begin{align*}
\Vect\begin{pmatrix}0102\\1020\end{pmatrix}=&\bigl\{\,\Psi(1)-\Psi(0),\Psi(10)-\Psi(01),\Psi(102)-\Psi(010),\Psi(1020)-\Psi(0102)\,\bigr\}\\
=&\{(-1,1,0),(0,0,0),(-1,0,1)\}.
\end{align*}
\end{example}

Let $\u$ be the fixed point of~\eqref{simpleParry}. For every $n\in\N$, we define the set~\cite[Def.~3.7 and Prop.~4.8]{Tu13}
\begin{equation}\label{Z(n)}
\Z_\u(n):=\Codec\begin{pmatrix}\varphi^{K+R}(0)\\ \u_{[n]}^{-1}\varphi^{K+R}(0)\u_{[n]}\end{pmatrix}\,,
\end{equation}
where $K$ is any integer such that $n\leq U_K$ and $R$ is a constant that we choose using the formula
\begin{equation}\label{R}
R=m-1+\min\{j\,|\,(\forall\ell\in\A)(\varphi^j(\ell) \text{ has the prefix } 0)\}\,.
\end{equation}
Note that the factors $\varphi^{K+R}(0)$ and $\u_{[n]}^{-1}\varphi^{K+R}(0)\u_{[n]}$ are obviously abelian equivalent, thus we are allowed to consider their abelian co-decomposition. In Proposition~\ref{Z(n)correct} we will see that the right-hand side of~\eqref{Z(n)} is independent of the choice of $K$, i.e., the symbol $\Z_\u(n)$ is well-defined.

\begin{proposition}\label{Z(n)correct}
Let $R$ be given by equation~\eqref{R}. For any $n\in\N$ and for any integer $K$ such that $n\leq U_K$ we have
\begin{equation}\label{Codex_explicit}
\Codec\begin{pmatrix}\varphi^{K+R}(0)\\ \u_{[n]}^{-1}\varphi^{K+R}(0)\u_{[n]}\end{pmatrix}=\bigcup_{\ell\in\A}\Codec\begin{pmatrix}\varphi^{K_0+R-m+1}(\ell)\\ \u_{[n]}^{-1}\varphi^{K_0+R-m+1}(\ell)\u_{[n]}\end{pmatrix}\,,
\end{equation}
where $K_0=\min\{K'\in\N\cup\{0\}\;|\;n\leq U_{K'}\}$.
In particular, the right-hand side of equation~\eqref{Z(n)} is independent of the choice of $K$.
\end{proposition}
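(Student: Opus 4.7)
The plan is to exhibit an explicit \emph{outer} co-decomposition of the pair $\bigl(\begin{smallmatrix}\varphi^{K+R}(0)\\ \u_{[n]}^{-1}\varphi^{K+R}(0)\u_{[n]}\end{smallmatrix}\bigr)$ whose blocks are precisely the pairs appearing on the right-hand side of~\eqref{Codex_explicit}, and then to argue that the maximal co-decomposition $\Codec$ must contain every outer cut position, so that it splits block-by-block and the identity follows. First I write $K+R=(K-K_0+m-1)+(K_0+R-m+1)$ and obtain $\varphi^{K+R}(0)=\varphi^{K_0+R-m+1}\bigl(\varphi^{K-K_0+m-1}(0)\bigr)$; denoting $\varphi^{K-K_0+m-1}(0)=w_0w_1\cdots w_{p-1}$ with $w_i\in\A$, this becomes the concatenation $\varphi^{K+R}(0)=\varphi^{K_0+R-m+1}(w_0)\cdots\varphi^{K_0+R-m+1}(w_{p-1})$.

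Next I invoke what formula~\eqref{R} for $R$ was designed to guarantee: the exponent $R-m+1$ is chosen exactly so that $\varphi^{R-m+1}(\ell)$ starts with the letter $0$ for every $\ell\in\A$. Consequently $\varphi^{K_0+R-m+1}(\ell)=\varphi^{K_0}\bigl(\varphi^{R-m+1}(\ell)\bigr)$ has $\varphi^{K_0}(0)$ as a prefix, and since $n\leq U_{K_0}$ it also has $\u_{[n]}$ as a prefix. Using this shared \emph{literal} prefix, each top block $\varphi^{K_0+R-m+1}(w_i)$ can be paired with the bottom block $\u_{[n]}^{-1}\varphi^{K_0+R-m+1}(w_i)\u_{[n]}$; the two have identical Parikh vectors, and a short telescoping (each internal copy of $\u_{[n]}$ cancelling against the $\u_{[n]}^{-1}$ opening the next block) shows that the concatenation of all bottom blocks is exactly $\u_{[n]}^{-1}\varphi^{K+R}(0)\u_{[n]}$. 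The resulting $p$ block-pairs thus constitute a valid, though in general not maximal, co-decomposition of the outer pair.

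The final ingredient is the elementary remark that a co-decomposition of any pair $\bigl(\begin{smallmatrix}v\\ w\end{smallmatrix}\bigr)$ with $|v|=|w|$ is nothing more than a chain $0=a_0<a_1<\cdots<a_h=|v|$ of cut positions at which the length-$a_j$ prefixes of $v$ and $w$ share the same Parikh vector, and the maximal one simply collects all such positions. Since the outer cut positions above satisfy this Parikh-balance condition, $\Codec$ of the outer pair contains them, and its restriction to each outer block coincides with the maximal co-decomposition of that block. This yields
\begin{equation*}
\Codec\begin{pmatrix}\varphi^{K+R}(0)\\ \u_{[n]}^{-1}\varphi^{K+R}(0)\u_{[n]}\end{pmatrix}=\bigcup_{i=0}^{p-1}\Codec\begin{pmatrix}\varphi^{K_0+R-m+1}(w_i)\\ \u_{[n]}^{-1}\varphi^{K_0+R-m+1}(w_i)\u_{[n]}\end{pmatrix}.
\end{equation*}
A short induction on~\eqref{simpleParry} shows that $\varphi^j(0)$ contains every letter of $\A$ once $j\geq m-1$; since $K-K_0+m-1\geq m-1$ this forces $\{w_0,\ldots,w_{p-1}\}=\A$, so the displayed union equals the union over $\ell\in\A$ in the right-hand side of~\eqref{Codex_explicit}. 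Independence of the choice of $K$ is then immediate, because that right-hand side involves only $K_0$.

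The step I expect to require the most care is the block-gluing verification in the middle paragraph: it succeeds only because every $\varphi^{K_0+R-m+1}(\ell)$ starts with the \emph{same literal} prefix $\u_{[n]}$, not merely with a word abelian equivalent to $\u_{[n]}$, and this is precisely the property that the choice of $R$ in~\eqref{R} is engineered to secure. Once this prefix property is in place, everything else is routine bookkeeping together with the characterization of $\Codec$ through Parikh-balanced cut positions.
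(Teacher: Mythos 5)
Your proposal is correct and follows essentially the same route as the paper: the same factorization of $\varphi^{K+R}(0)$ into the blocks $\varphi^{K_0+R-m+1}(\ell)$, the same use of the prefix property guaranteed by the choice of $R$ to telescope $\u_{[n]}$ through the bottom row, and the same observation that the maximal co-decomposition refines this block decomposition. Your explicit characterization of $\Codec$ via Parikh-balanced cut positions merely spells out what the paper calls ``obvious,'' so there is no substantive difference.
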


\begin{proof}
Let us take an arbitrary $K$ such that $n\leq U_K$. Since we have $R\geq m-1$ by~\eqref{R}, we can write
$$
\varphi^{K+R}(0)=\varphi^{K_0}(\varphi^{R-m+1}(\varphi^{m-1+K-K_0}(0)))\,.
$$
It is easy to see that for every $j\geq m-1$, the factor $\varphi^{j}(0)$ contains each letter from $\A$, which follows from~\eqref{simpleParry}. We have $K\geq K_0$, thus $m-1+K-K_0\geq m-1$. Therefore, the factor $\varphi^{m-1+K-K_0}(0)$ contains each letter $\ell\in\A$. Hence
\begin{equation}\label{phiKR}
\varphi^{K+R}(0)=\varphi^{K_0}(w_0w_1w_2\cdots w_h)\,,
\end{equation}
where
$$
\{w_0,w_1,w_2,\ldots,w_h\}=\left\{\left.\varphi^{R-m+1}(\ell)\;\right|\;\ell\in\A\right\}\,.
$$
The definition of $R$ requires that the factor $\varphi^{R-m+1}(\ell)$ has the prefix $0$ for any $\ell\in\A$. As a result, each factor of type $\varphi^{K_0}(\varphi^{R-m+1}(\ell))$ has the prefix $\varphi^{K_0}(0)$. At the same time we know, with regard to the assumption $n\leq U_{K_0}$, that $\u_{[n]}$ is a prefix of $\varphi^{K_0}(0)$. To sum up, the words $\varphi^{R-m+1}(\ell)$ for $\ell\in\A$ have $\u_{[n]}$ as their prefixes. Now we can rewrite equation~\eqref{phiKR} as follows:
$$
\varphi^{K+R}(0)=\varphi^{K_0}(w_0)\varphi^{K_0}(w_1)\varphi^{K_0}(w_2)\cdots\varphi^{K_0}(w_h)=z_0z_1z_2\cdots z_h\,,
$$
where the factors $z_0,z_1,z_2,\ldots,z_h$ satisfy
\begin{equation}\label{zz}
\{z_0,z_1,z_2,\ldots,z_h\}=\left\{\left.\varphi^{K_0+R-m+1}(\ell)\;\right|\;\ell\in\A\right\},
\end{equation}
and, moreover, $\u_{[n]}$ is a prefix of $z_j$ for every $j\in\{0,1,\ldots,h\}$. This allows us to decompose
$$
\begin{array}{ccccccc}
\varphi^{K+R}(0)&=&z_0&z_1&z_2&\cdots&z_h \\
\u_{[n]}^{-1}\varphi^{K+R}(0)\u_{[n]}&=&\u_{[n]}^{-1}z_0\u_{[n]}&\u_{[n]}^{-1}z_1\u_{[n]}&\u_{[n]}^{-1}z_2\u_{[n]}&\cdots&\u_{[n]}^{-1}z_h\u_{[n]}
\end{array}
$$
Factors $z_j$ and $\tilde{z}_j=\u_{[n]}^{-1}z_j\u_{[n]}$ are abelian equivalent for every $j=0,1,\ldots h$, thus $\bigcup_{j=0}^h\left\{\bigl(\begin{smallmatrix}z_j\\ \tilde{z}_j\end{smallmatrix}\bigr)\right\}$ is an abelian co-decomposition of $\bigl(\begin{smallmatrix}\varphi^{K_0+R}(0)\\ \u_{[n]}^{-1}\varphi^{K+R}(0)\u_{[n]}\end{smallmatrix}\bigr)$. The ``maximal'' (i.e., having maximal number of elements) abelian co-decomposition of $\bigl(\begin{smallmatrix}\varphi^{K_0+R}(0)\\ \u_{[n]}^{-1}\varphi^{K+R}(0)\u_{[n]}\end{smallmatrix}\bigr)$ is obviously obtained as the union of the ``maximal'' abelian co-decompositions of $\binom{z_j}{\tilde{z}_j}$ for $j=0,1,\ldots h$, i.e.,
\begin{equation}\label{corr1}
\Codec\begin{pmatrix}\varphi^{K+R}(0)\\ \u_{[n]}^{-1}\varphi^{K+R}(0)\u_{[n]}\end{pmatrix}=\bigcup_{j=0}^h\Codec\begin{pmatrix}z_j\\ \u_{[n]}^{-1}z_j\u_{[n]}\end{pmatrix}\,.
\end{equation}
Finally, equation~\eqref{zz} gives the identity
\begin{equation}\label{corr2}
\bigcup_{j=0}^h\Codec\begin{pmatrix}z_j\\ \u_{[n]}^{-1}z_j\u_{[n]}\end{pmatrix}=\bigcup_{\ell\in\A}\Codec\begin{pmatrix}\varphi^{K_0+R-m+1}(\ell)\\ \u_{[n]}^{-1}\varphi^{K_0+R-m+1}(\ell)\u_{[n]}\end{pmatrix}\,.
\end{equation}
Combining equations~\eqref{corr1} and \eqref{corr2} one gets equation~\eqref{Codex_explicit}.
\end{proof}

The set $\Z_\u(n)$ together with the map $\Vect$ allow to determine the set of relative Parikh vectors corresponding to the number $n$.
Indeed, one can prove that~\cite[Prop.~3.8]{Tu13}
\begin{equation}\label{Pred(n)}
\Prel_\u(n)=\bigcup_{\begin{pmatrix}z\\ \tilde{z}\end{pmatrix}\in\Z_\u(n)}\Vect\begin{pmatrix}z\\ \tilde{z}\end{pmatrix}
\end{equation}
for any $n\in\N$. Consequently, if $\Z_\u(n)$ is known, it is a trivial task to calculate $\AC_\u(n)$ using the formula
\begin{equation}\label{ACP}
\AC_\u(n)=\#\bigcup_{\begin{pmatrix}z\\ \tilde{z}\end{pmatrix}\in\Z_\u(n)}\Vect\begin{pmatrix}z\\ \tilde{z}\end{pmatrix}\,,
\end{equation}
which follows immediately from equations~\eqref{ACrel} and \eqref{Pred(n)}.

\begin{example}\label{Z(1)}
Let us calculate $\Z_\t(1)$. We have $\mathbf{t}_{[1]}=0$. Since $1\leq T_0=1$ and $R=3-1+\min\{1,2,3,\ldots\}=3$, we shall use formula~\eqref{Z(n)} with $K+R=0+3=3$. Therefore, from~\eqref{Z(n)},
$$
\Z_\t(1)=\Codec\begin{pmatrix}\varphi_\t^3(0)\\ 0^{-1}\varphi_\t^3(0)0\end{pmatrix}\,.
$$
We have
$$
\begin{array}{ccc}
\varphi_\t^3(0)&=&01\:02\:01\:0 \\
0^{-1}\varphi_\t^3(0)0&=&10\:20\:10\:0
\end{array}
$$
whence we obtain
$$
\Z_\t(1)=\left\{\begin{pmatrix}0\\0\end{pmatrix},\begin{pmatrix}01\\10\end{pmatrix},\begin{pmatrix}02\\20\end{pmatrix}\right\}\,.
$$
Let us continue the example and demonstrate the application of equations~\eqref{Pred(n)} and \eqref{ACP}. We have
$$
\Vect\binom{0}{0}=\{(0,0,0)\},\; \Vect\binom{01}{10}=\{(-1,1,0),(0,0,0)\},\; \Vect\binom{02}{20}=\{(-1,0,1),(0,0,0)\}.
$$
Equation~\eqref{Pred(n)} then gives
$\Prel_\t(1)=\{(0,0,0),(-1,1,0),(-1,0,1)\}$, and hence, from
Eq.~\eqref{ACP} we get $\AC_\t(1)=3$. This calculation has an
illustrative purpose only -- the result $\AC_\t(1)=3$ could be of
course found much easier from equation~\eqref{AC}.
\end{example}

The essential point is that sets $\Z_\u(n)$ does not need to be calculated from definition~\eqref{Z(n)} for each $n\in\N$. We are going to present a recurrence relation that will allow us to express $\Z_\u(N)$ in terms of $\Z_\u(n)$ for a certain $n<N$.
Let $N\in\N$ be an integer with the normal $U$-representation $\langle N\rangle_U=d_k d_{k-1}\cdots d_1d_0$.
Let us take a $j\in[1,k]$, and split the sequence $d_k d_{k-1}\cdots d_1d_0$ into two parts in the way $d_k\cdots d_{j+1}$, $d_{j}\cdots d_0$. Note that both parts are valid normal $U$-representations of certain integers; we write them as
$n$ and $p$, as shown in equation~\eqref{Nnplk}.
\begin{equation}\label{Nnplk}
\langle N\rangle_U=\underbrace{d_k\cdots d_{j+1}}_{\langle n\rangle_U}\underbrace{d_{j}\cdots d_0}_{\langle p\rangle_U}
\end{equation}
Then we have the following result~\cite[Prop.~5.1]{Tu13}:

\begin{proposition}
Let the numbers $N,n,p$ and $k,j$ obey~\eqref{Nnplk}. If $\varphi^{j+1}(\tilde{z})$ has the prefix $\u_{[p]}$ for all $\begin{pmatrix}z\\ \tilde{z}\end{pmatrix}\in\Z_\u(n)$,
then $\Z_\u(N)$ can be calculated from $\Z_\u(n)$ using the formula
\begin{equation}\label{ZNMn}
\Z_\u(N)=\bigcup_{\begin{pmatrix}z\\ \tilde{z}\end{pmatrix}\in\Z_\u(n)}\Codec\begin{pmatrix}\varphi^{j+1}(z)\\ \u_{[p]}^{-1}\varphi^{j+1}(\tilde{z})\u_{[p]}\end{pmatrix}\,.
\end{equation}
\end{proposition}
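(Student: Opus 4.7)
The plan is to exploit the recursive structure of normal $U$-representations: the splitting $\langle N\rangle_U=\langle n\rangle_U\langle p\rangle_U$ translates into a prefix factorization
$\u_{[N]}=\varphi^{j+1}(\u_{[n]})\,\u_{[p]}.$
I would first establish this identity, which follows from the standard expansion $\u_{[M]}=\varphi^k(0)^{d_k}\varphi^{k-1}(0)^{d_{k-1}}\cdots\varphi^0(0)^{d_0}$ valid for any $M$ with normal $U$-representation $d_k\cdots d_0$.

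Next I would choose an integer $K$ large enough that both $N\leq U_K$ and $n\leq U_{K-j-1}$ hold. Then Proposition~\ref{Z(n)correct} lets me compute $\Z_\u(N)=\Codec\binom{\varphi^{K+R}(0)}{\u_{[N]}^{-1}\varphi^{K+R}(0)\u_{[N]}}$ and $\Z_\u(n)=\Codec\binom{\varphi^{K+R-j-1}(0)}{\u_{[n]}^{-1}\varphi^{K+R-j-1}(0)\u_{[n]}}$. Writing $\Z_\u(n)=\{\binom{z_0}{\tilde z_0},\ldots,\binom{z_h}{\tilde z_h}\}$ gives factorizations $\varphi^{K+R-j-1}(0)=z_0\cdots z_h$ and $\u_{[n]}^{-1}\varphi^{K+R-j-1}(0)\u_{[n]}=\tilde z_0\cdots\tilde z_h$ with $\Psi(z_i)=\Psi(\tilde z_i)$. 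Applying the morphism $\varphi^{j+1}$ to both rows yields $\varphi^{K+R}(0)=\varphi^{j+1}(z_0)\cdots\varphi^{j+1}(z_h)$ and $\varphi^{j+1}(\u_{[n]})^{-1}\varphi^{K+R}(0)\varphi^{j+1}(\u_{[n]})=\varphi^{j+1}(\tilde z_0)\cdots\varphi^{j+1}(\tilde z_h)$.

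The key step combines these identities with the prefix factorization: substituting $\u_{[N]}=\varphi^{j+1}(\u_{[n]})\u_{[p]}$ converts the bottom word of $\binom{\varphi^{K+R}(0)}{\u_{[N]}^{-1}\varphi^{K+R}(0)\u_{[N]}}$ into $\u_{[p]}^{-1}\varphi^{j+1}(\tilde z_0)\cdots\varphi^{j+1}(\tilde z_h)\u_{[p]}$. Here the hypothesis enters: since each $\varphi^{j+1}(\tilde z_i)$ begins with $\u_{[p]}$, I can roll the trailing $\u_{[p]}$ leftward through the product, rewriting it as $(\u_{[p]}^{-1}\varphi^{j+1}(\tilde z_0)\u_{[p]})\cdots(\u_{[p]}^{-1}\varphi^{j+1}(\tilde z_h)\u_{[p]})$. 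Each factor on the right has the same Parikh vector as $\varphi^{j+1}(z_i)$, so the pairs $\binom{\varphi^{j+1}(z_i)}{\u_{[p]}^{-1}\varphi^{j+1}(\tilde z_i)\u_{[p]}}$ constitute a valid abelian co-decomposition of the pair defining $\Z_\u(N)$.

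To finish, I would refine each of these pairs to its maximal co-decomposition and observe, following the same argument as at the end of the proof of Proposition~\ref{Z(n)correct}, that the union of maximal refinements is itself the maximal co-decomposition of the full concatenated pair, hence equal to $\Z_\u(N)$; this delivers formula~\eqref{ZNMn}. I expect the most delicate point to be the conjugation-distribution identity $\u_{[p]}^{-1}A_0 A_1\cdots A_h\,\u_{[p]}=(\u_{[p]}^{-1}A_0\u_{[p]})\cdots(\u_{[p]}^{-1}A_h\u_{[p]})$: it looks innocuous but truly requires the hypothesis that every $A_i$ begins with $\u_{[p]}$, so that each internal cancellation of $\u_{[p]}\u_{[p]}^{-1}$ is licit. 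The maximality claim at the end is then almost free once the concatenation is in place.
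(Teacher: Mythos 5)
Your proposal is correct. Note that the paper itself does not prove this proposition---it is quoted verbatim from \cite[Prop.~5.1]{Tu13}---so there is no in-paper proof to compare against; but your argument is sound and is built from exactly the same ingredients the paper uses to prove the companion statement, Proposition~\ref{Z(n)correct}: a block factorization of the pair from definition~\eqref{Z(n)}, the distribution of the conjugation by $\u_{[p]}$ across the blocks (licit precisely because of the prefix hypothesis, as you correctly emphasize), and the fact that the maximal co-decomposition of a concatenation of abelian-equivalent blocks is the union of the maximal co-decompositions of the blocks. The only two points I would insist you write out in full are (i) the prefix identity $\u_{[N]}=\varphi^{j+1}(\u_{[n]})\,\u_{[p]}$, which does require the normality of both halves of the split~\eqref{Nnplk} (the paper notes that both parts are valid normal $U$-representations, and this is what makes the greedy expansions concatenate correctly), and (ii) the maximality step, which reduces to observing that the cut positions of the maximal co-decomposition are exactly the positions where the two prefixes have equal Parikh vectors, so that cuts at block boundaries are always present and the cuts inside each block are those of that block alone. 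Both are routine, and the second is the same ``obviously obtained as the union'' step the paper already invokes in the proof of Proposition~\ref{Z(n)correct}; with them filled in, your derivation of~\eqref{ZNMn} is complete.
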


For dealing with the Tribonacci word, the following corollary will come in handy. It is obtained straightforwardly from equation~\eqref{ZNMn} by choosing $j=0$ and $p=0$ or $p=1$.

\begin{corollary}\label{CorZ(N01)}
We have
\begin{align*}
\langle N\rangle_T=\langle n\rangle_T0 \qquad\Rightarrow\qquad & \Z_\t(N)=\bigcup_{\begin{pmatrix}z\\ \tilde{z}\end{pmatrix}\in\Z_\t(n)}\Codec\begin{pmatrix}\varphi_\t(z)\\ \varphi_\t(\tilde{z})\end{pmatrix}\,, \\
\langle N\rangle_T=\langle n\rangle_T1 \qquad\Rightarrow\qquad & \Z_\t(N)=\bigcup_{\begin{pmatrix}z\\ \tilde{z}\end{pmatrix}\in\Z_\t(n)}\Codec\begin{pmatrix}\varphi_\t(z)\\ 0^{-1}\varphi_\t(\tilde{z})0\end{pmatrix}\,.
\end{align*}
\end{corollary}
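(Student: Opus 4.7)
The plan is to apply the preceding proposition, namely formula~\eqref{ZNMn}, with the specialization $j=0$. Writing $\langle N\rangle_T = d_k d_{k-1}\cdots d_1 d_0$, the decomposition~\eqref{Nnplk} then reads $\langle n\rangle_T = d_k\cdots d_1$ and $\langle p\rangle_T = d_0$, so that $p = d_0 \in \{0,1\}$. I would first remark that $\langle n\rangle_T$ remains a bona fide normal $T$-representation, because stripping off the lowest-order digit preserves the (local) admissibility condition for $T$-representations. The two cases of the corollary correspond exactly to $p=0$ and $p=1$.

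The only genuine check is the hypothesis of the invoked proposition, which demands that $\varphi_\t^{j+1}(\tilde{z}) = \varphi_\t(\tilde{z})$ admit $\t_{[p]}$ as a prefix for every pair $\bigl(\begin{smallmatrix}z\\\tilde{z}\end{smallmatrix}\bigr) \in \Z_\t(n)$. For $p=0$ the condition is vacuous, since $\t_{[0]} = \varepsilon$. For $p=1$ I must show that $\varphi_\t(\tilde{z})$ begins with the letter $0$; inspecting~\eqref{Trib.subst} one sees that $\varphi_\t(\ell)$ starts with $0$ for every $\ell \in \{0,1,2\}$, so $\varphi_\t(\tilde{z})$ starts with $0$ as soon as $\tilde{z}$ is non-empty. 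Non-emptiness is built into the very definition of abelian co-decomposition via~\eqref{rozklad} (the words $\tilde{z}_j$ are required to be non-empty), hence it holds automatically for every pair in $\Z_\t(n)$.

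With the hypothesis verified, substitution into~\eqref{ZNMn} is mechanical. For $p=0$, the conjugation $\t_{[0]}^{-1}\varphi_\t(\tilde{z})\t_{[0]}$ collapses to $\varphi_\t(\tilde{z})$, yielding the first displayed identity. For $p=1$, it becomes $0^{-1}\varphi_\t(\tilde{z})0$, yielding the second. I anticipate no real obstacle here: the proposition does all the heavy lifting, and the corollary merely requires identifying $\t_{[p]}$ in the two small cases together with the routine emptiness observation about $\tilde{z}$.
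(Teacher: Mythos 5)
Your proposal is correct and follows exactly the route the paper intends: the paper derives the corollary "straightforwardly from equation~\eqref{ZNMn} by choosing $j=0$ and $p=0$ or $p=1$," and you have simply filled in the routine verification of the prefix hypothesis (vacuous for $p=0$; for $p=1$ it holds because every $\varphi_\t(\ell)$ begins with $0$ and the components of an abelian co-decomposition are non-empty by definition). No gaps.
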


Since the abelian co-decompositions $\Codec\begin{pmatrix}\varphi_\t(z)\\ \varphi_\t(\tilde{z})\end{pmatrix}$ and $\Codec\begin{pmatrix}\varphi_\t(z)\\ 0^{-1}\varphi_\t(\tilde{z})0\end{pmatrix}$ will be often used throughout the paper, we introduce a shorthand for them. Let $z,\tilde{z}$ be factors of $\t$ satisfying $\Psi(z)=\Psi(\tilde{z})$, and let $\zeta$ stand for $\binom{z}{\tilde{z}}$. We define the symbols
\begin{equation}\label{D0D1}
\D_0(\zeta):=\Codec\begin{pmatrix}\varphi_\t(z)\\ \varphi_\t(\tilde{z})\end{pmatrix} \qquad\text{and}\qquad \D_1(\zeta):=\Codec\begin{pmatrix}\varphi_\t(z)\\ 0^{-1}\varphi_\t(\tilde{z})0\end{pmatrix}\,.
\end{equation}
Recall that the numbers $d_i$ in the normal $T$-representation~\eqref{Trep} attain the values $0$ and $1$ only. The statement of Corollary~\ref{CorZ(N01)} can be thus formulated as
\begin{equation}\label{n,delta}
\langle N\rangle_T=\langle n\rangle_T d \qquad\Rightarrow\qquad \Z_\t(N)=\bigcup_{\zeta\in\Z_\t(n)}\D_d(\zeta)\,.
\end{equation}
Formula~\eqref{n,delta} is crucial for us. It says how the set $\Z_\t(n)$ is transformed when a digit $d$ is added to $\langle n\rangle_T$. For this reason it will be sometimes refered to as the ``transformation formula''.

From now on we will focus on the Tribonacci word. Therefore, we can simplify the notation by dropping the subscript $\t$ from symbols $\AC_\t$, $\Prel_\t$, $\varphi_\t$, $\Z_\t$.

\section{Sets $\Z(n)$}\label{Sect.Z}

This section can be regarded as the core of the paper. Examining the structure of sets $\Z(n)$, we find out that only finitely many of them are mutually different. In other words, we show that there exist sets $\Z_1,\Z_2,\ldots,\Z_{M}$ such that for any $n\in\N$, the set $\Z(n)$ is equal to $\Z_q$ for a certain $q\in\{1,\ldots,M\}$. In addition, we express $\Z_1,\Z_2,\ldots,\Z_{M}$ explicitly.

\begin{lemma}\label{Z(N0),Z(N1)}
Let an $N\in\N$ have the representation $\langle N\rangle_T=1d_{k-1}\cdots d_1d_0$. Let $n$ be the number with the representation $\langle n\rangle_T=1d_{k-1}\cdots d_1$, i.e., $\langle N\rangle_T=\langle n\rangle_T d_0$. Then
\begin{equation}\label{Z(N)Z(n)}
\Z(N)=\bigcup_{\zeta\in\Z(n)}\D_0(\zeta) \qquad\text{or}\qquad \Z(N)=\bigcup_{\zeta\in\Z(n)}\D_1(\zeta)\,.
\end{equation}
\end{lemma}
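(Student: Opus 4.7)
The plan is to recognize that this lemma is essentially a direct specialization of the transformation formula \eqref{n,delta} (equivalently Corollary~\ref{CorZ(N01)}) to the case where the digit appended to $\langle n\rangle_T$ is the last digit $d_0$ of $\langle N\rangle_T$. The two cases in \eqref{Z(N)Z(n)} will correspond to $d_0 = 0$ and $d_0 = 1$, respectively.

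First I would observe that, by the definition of the normal $T$-representation, every digit $d_i$ belongs to $\{0,1\}$, and in particular $d_0 \in \{0,1\}$. Moreover, by construction, $\langle N\rangle_T = \langle n\rangle_T\, d_0$ in the sense of concatenation, which is precisely the splitting pattern displayed in~\eqref{Nnplk} with $j = 0$ and $p = d_0 \in \{0,1\}$. Thus the lemma is exactly the conclusion of Corollary~\ref{CorZ(N01)} applied to this splitting, provided its hypothesis holds.

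Next I would verify the hypothesis inherited from the proposition preceding Corollary~\ref{CorZ(N01)}: namely that $\varphi^{j+1}(\tilde{z}) = \varphi_\t(\tilde z)$ has the prefix $\u_{[p]}$ for every $\bigl(\begin{smallmatrix}z\\\tilde{z}\end{smallmatrix}\bigr) \in \Z(n)$. For $p = 0$ this is trivial since $\u_{[0]} = \varepsilon$. For $p = 1$ we have $\u_{[1]} = 0$, and inspecting the Tribonacci substitution~\eqref{Trib.subst} shows that each of $\varphi_\t(0) = 01$, $\varphi_\t(1) = 02$, $\varphi_\t(2) = 0$ begins with the letter $0$; consequently $\varphi_\t(\tilde z)$ begins with $0$ for every nonempty $\tilde z$, and the $\tilde z$ appearing in elements of $\Z(n)$ are nonempty by definition of an abelian co-decomposition.

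With the hypothesis verified, I would conclude by applying \eqref{n,delta}: if $d_0 = 0$ then $\Z(N) = \bigcup_{\zeta \in \Z(n)} \D_0(\zeta)$, and if $d_0 = 1$ then $\Z(N) = \bigcup_{\zeta \in \Z(n)} \D_1(\zeta)$. There is no substantive obstacle here; the only thing one might briefly worry about is whether $\langle n\rangle_T = 1 d_{k-1}\cdots d_1$ is itself a valid normal $T$-representation (so that the earlier machinery applies), but this is immediate from the fact that prefixes of normal $T$-representations are again normal $T$-representations, since the greedy/forbidden-factor characterization of such representations is inherited by prefixes.
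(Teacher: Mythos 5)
Your proposal is correct and follows the same route as the paper, which simply declares the lemma ``a trivial consequence of the transformation formula~\eqref{n,delta}'' (itself Corollary~\ref{CorZ(N01)}, i.e., the splitting~\eqref{Nnplk} with $j=0$ and $p=d_0$). Your added verification of the prefix hypothesis (that $\varphi_\t(\tilde z)$ begins with $0=\u_{[1]}$ for nonempty $\tilde z$) and of the validity of the truncated representation only makes explicit what the paper leaves implicit.
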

Lemma~\ref{Z(N0),Z(N1)} is just a trivial consequence of the transformation formula~\eqref{n,delta}.
The main result of this section follows.

\begin{theorem}\label{Subsets}
There exist a $56$-element set
$$
\Z_{\mathrm{super}}=\{\zeta_1,\zeta_2,\zeta_3,\ldots,\zeta_{56}\}
$$
and $277$ of its subsets $\Z_1,\Z_2,\ldots,\Z_{277}\subset\Z_{\mathrm{super}}$ such that
\begin{equation}\label{Z1..277}
\left(\forall n\in\N\right)\,\left(\exists q\in\{1,2,\ldots,277\}\right)\,\left(\Z(n)=\Z_q\right)\,.
\end{equation}
\end{theorem}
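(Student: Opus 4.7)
The plan is to view the map $n \mapsto \Z(n)$ as the state of a deterministic finite process reading $\langle n\rangle_T$ digit by digit, and then establish, by two nested explicit enumerations, that both the pool of pairs appearing in any $\Z(n)$ and the number of distinct reachable sets $\Z(n)$ are finite.

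First, I make the iteration precise. By Lemma~\ref{Z(N0),Z(N1)}, for any $N\geq 2$ with $\langle N\rangle_T=\langle n\rangle_T d_0$ one has
$$
\Z(N)=\bigcup_{\zeta\in\Z(n)}\D_{d_0}(\zeta).
$$
Thus, starting from $\Z(1)$ (which is given explicitly in Example~\ref{Z(1)}), every $\Z(n)$ with $n\geq 2$ is obtained by applying a finite sequence of the set-valued operators $\Z\mapsto\bigcup_{\zeta\in\Z}\D_d(\zeta)$, $d\in\{0,1\}$, where the admissible digit sequences are exactly those that extend the initial $1$ while remaining normal Tribonacci representations (i.e.\ words over $\{0,1\}$ avoiding the factor $111$).

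Second, I define
$$
\Z_{\mathrm{super}}:=\bigl\{\zeta \mid \zeta\text{ is reachable from some element of }\Z(1)\text{ by a finite chain of applications of }\D_0,\D_1\bigr\}.
$$
By the first step, every element of every $\Z(n)$ lies in $\Z_{\mathrm{super}}$. The heart of the argument is to show that $\Z_{\mathrm{super}}$ is finite, and in fact has exactly $56$ elements. I would do this by explicit breadth-first enumeration: start with the three pairs of $\Z(1)$, compute $\D_0(\zeta)$ and $\D_1(\zeta)$ for each newly generated pair via the definition \eqref{D0D1}, and observe that after finitely many layers no new pair appears. The mechanism that makes the process terminate despite $\varphi_\t$ being length-expanding is the maximality convention on $\Codec$: each image $\varphi_\t(z)$ versus $\varphi_\t(\tilde z)$ (with or without the conjugation by $0$) splits, under the abelian co-decomposition, into short "irreducible" pieces drawn from a fixed pool.

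Third, once $\Z_{\mathrm{super}}$ is pinned down, the operators $\Z\mapsto\bigcup_{\zeta\in\Z}\D_d(\zeta)$ descend to well-defined maps on the finite power set $2^{\Z_{\mathrm{super}}}$. Since $\Z(n)\subseteq\Z_{\mathrm{super}}$ for all $n$, the collection $\{\Z(n)\mid n\in\N\}$ is contained in the orbit of $\Z(1)$ under these two maps, restricted to $111$-avoiding digit sequences. A second breadth-first search—now on subsets rather than elements—enumerates this orbit and yields exactly $277$ distinct subsets, which I label $\Z_1,\ldots,\Z_{277}$; this establishes \eqref{Z1..277}.

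The main obstacle is the first enumeration. A priori there is no length bound on pairs arising under iterated $\D_0,\D_1$, so proving that $\Z_{\mathrm{super}}$ is finite with exactly $56$ elements requires either a careful case analysis that uses the $2$-balance of $\t$ and the explicit action of $\varphi_\t$ on each letter, or a computer-verified exhaustive search—the latter being the natural route here. The second stage, once the pool of $56$ pairs is known, is a routine finite-state exploration of at most $2^{56}$ candidate subsets.
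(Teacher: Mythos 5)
Your proposal follows essentially the same route as the paper: starting from the explicit $\Z(1)$, iterate the transformation formula \eqref{n,delta} digit by digit and perform a breadth-first search that simultaneously discovers the finite pool of pairs $\zeta_j$ (closing off at $56$) and the finite orbit of sets (closing off at $277$), with termination verified by exhaustive (computer-assisted) enumeration exactly as in the paper's pseudocode. The only cosmetic differences are that you separate the two enumerations into sequential stages rather than interleaving them, and that you additionally restrict to admissible ($111$-avoiding) digit strings, which the paper does not bother to do since applying both $\D_0$ and $\D_1$ unconditionally can only enlarge the collection of candidate sets and does not affect the validity of \eqref{Z1..277}.
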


\begin{proof}

We begin the proof by exploring $\Z(n)$ for $n$ having a $1$-digit representation, i.e., $\langle n\rangle_T=d_0$. Trivially, there is one single positive number having such representation, namely $n=1$.
We know from Example~\ref{Z(1)} that
$$
\Z(1)=\{\zeta_1,\zeta_2,\zeta_3\}
$$
for
$$
\zeta_1=\begin{pmatrix}0\\0\end{pmatrix},\quad\zeta_2=\begin{pmatrix}01\\10\end{pmatrix},\quad\zeta_3=\begin{pmatrix}02\\20\end{pmatrix}\,.
$$
From now on we let $\Z_1$ denote the set $\Z(1)$.

In the next step we proceed to exploring $\Z(N)$ for those $N$ having $2$-digit representations, i.e., $\langle N\rangle_T=1d_0$ for $d_0\in\{0,1\}$.
We apply Lemma~\ref{Z(N0),Z(N1)} with $k=1$. Writing $\langle N\rangle_T=1d_0$ in the form $\langle n\rangle_T d_0$ implies $\langle n\rangle_T=1$, and
hence $n=1$.
For such $N$ and $n$, equations~\eqref{Z(N)Z(n)} read
$$
\Z(N)=\bigcup_{\zeta\in\Z_1}\D_0(\zeta) \qquad\text{or}\qquad \Z(N)=\bigcup_{\zeta\in\Z_1}\D_1(\zeta)\,.
$$
Since $\Z_1=\{\zeta_1,\zeta_2,\zeta_3\}$, we need to find $\D_0(\zeta_j)$ and $\D_1(\zeta_j)$ for $j=1,2,3$, which is an easy task. Let us start with $\D_0(\zeta_j)$. With regard to~\eqref{D0D1}, we have to calculate
$$
\begin{array}{ccccccccccc}
\varphi(0)&=&0\:1 &\quad& \varphi(01)&=&0\:102 &\quad& \varphi(02)&=&0\:10 \\
\varphi(0)&=&0\:1 &\quad& \varphi(10)&=&0\:201 &\quad& \varphi(20)&=&0\:01
\end{array}
$$
Hence
\begin{equation}\label{D0zeta123}
\D_0(\zeta_1)=\{\zeta_1,\zeta_4\}\,,\quad \D_0(\zeta_2)=\{\zeta_1,\zeta_5\}\,,\quad \D_0(\zeta_3)=\{\zeta_1,\zeta_6\}\,,
\end{equation}
where $\zeta_1=\bigl(\begin{smallmatrix}0\\0\end{smallmatrix}\bigr)$ has been defined above and
$$
\zeta_4=\begin{pmatrix}1\\1\end{pmatrix}\,,\quad \zeta_5=\begin{pmatrix}102\\201\end{pmatrix}\,,\quad \zeta_6=\begin{pmatrix}10\\01\end{pmatrix}\,.
$$
A similar calculation leads to sets $\D_1(\zeta_j)$ for $j=1,2,3$; see~\eqref{D0D1}. We have
$$
\begin{array}{ccccccccccc}
\varphi(0)&=&01 &\quad& \varphi(01)&=&0102 &\quad& \varphi(02)&=&0\:1\:0 \\
0^{-1}\varphi(0)0&=&10 &\quad& 0^{-1}\varphi(10)0&=&2010 &\quad& 0^{-1}\varphi(20)0&=&0\:1\:0
\end{array}
$$
hence
\begin{equation}\label{D1zeta123}
\D_1(\zeta_1)=\{\zeta_2\}\,,\qquad \D_1(\zeta_2)=\{\zeta_7\}\,,\qquad \D_1(\zeta_3)=\{\zeta_1,\zeta_4\}
\end{equation}
for $\zeta_1,\zeta_2,\zeta_4$ defined above and
$$
\zeta_7=\begin{pmatrix}0102\\2010\end{pmatrix}\,.
$$
To sum up, if $\langle N\rangle_T=1d_0$, then
\begin{equation}\label{Z23}
\Z(N)=\{\zeta_1,\zeta_4,\zeta_5,\zeta_6\}:=\Z_2 \qquad\text{or}\qquad \Z(N)=\{\zeta_1,\zeta_2,\zeta_4,\zeta_7\}:=\Z_3\,.
\end{equation}
We will list sets $\Z_q$ in Table~\ref{Tab.Z}. The elements $\zeta_j$ together with $\D_0(\zeta_j)$ and $\D_1(\zeta_j)$ will be listed in Table~\ref{Tab.zeta}.

Now we can apply Lemma~\ref{Z(N0),Z(N1)} again, this time for $k=2$, in order to explore $\Z(N)$ for those $N$ having $3$-digit representations, $\langle N\rangle_T=1d_1d_0$. Any such $N$ can be written in the form $\langle n\rangle_T d_0$ for $\langle n\rangle_T=1d_1$ and $d_0\in\{0,1\}$. The representation of $n$ has two digits, and hence $\Z(n)=\Z_2$ or $\Z(n)=\Z_3$ due to the result of the previous step. We thus need to apply equations~\eqref{Z(N)Z(n)} for $\Z(n)=\Z_2$ and $\Z(n)=\Z_3$, which requires the knowledge of $\D_0(\zeta_j)$ and $\D_1(\zeta_j)$ for $j=1,\ldots,7$. Sets $\D_0(\zeta_j)$ and $\D_1(\zeta_j)$ for $j=1,2,3$ are already known; see equations~\eqref{D0zeta123},~\eqref{D1zeta123}. Concerning $j=4,\ldots,7$, a short calculation gives
\begin{equation}\label{D01zeta4..7}
\begin{array}{llll}
\D_0(\zeta_4)=\{\zeta_1,\zeta_8\},\; & \D_0(\zeta_5)=\{\zeta_1,\zeta_9\}, & \D_0(\zeta_6)=\{\zeta_1,\zeta_{10}\},\; & \D_0(\zeta_7)=\{\zeta_1,\zeta_6,\zeta_{11}\}, \\
\D_1(\zeta_4)=\{\zeta_3\}, & \D_1(\zeta_5)=\{\zeta_1,\zeta_{10}\},\; & \D_1(\zeta_6)=\{\zeta_{12}\}, & \D_1(\zeta_7)=\{\zeta_1,\zeta_4,\zeta_8\}
\end{array}
\end{equation}
for
$$
\zeta_8=\begin{pmatrix}2\\2\end{pmatrix}\,,\quad \zeta_9=\begin{pmatrix}2010\\0102\end{pmatrix}\,,\quad \zeta_{10}=\begin{pmatrix}201\\102\end{pmatrix}\,,\quad \zeta_{11}=\begin{pmatrix}20\\02\end{pmatrix}\,,\quad \zeta_{12}=\begin{pmatrix}0201\\1020\end{pmatrix}\,.
$$
Once we substitute $\Z(n)=\Z_2$ and $\Z(n)=\Z_3$ into equation~\eqref{Z(N)Z(n)} and use the known structure of $\Z_2,\Z_3$ (cf.~\eqref{Z23}) together with equations~\eqref{D0zeta123},~\eqref{D1zeta123} and~\eqref{D01zeta4..7}, we get
\begin{align*}
\Z(n)&=\Z_2 \quad\Rightarrow\quad \\
& \Z(N)=\{\zeta_1,\zeta_4,\zeta_8,\zeta_9,\zeta_{10}\}=:\Z_4 \quad\text{or}\quad \Z(N)=\{\zeta_1,\zeta_2,\zeta_3,\zeta_{10},\zeta_{12}\}=:\Z_6\,; \\
\Z(n)&=\Z_3 \quad\Rightarrow\quad \\
& \Z(N)=\{\zeta_1,\zeta_4,\zeta_5,\zeta_6,\zeta_8,\zeta_{11}\}=:\Z_5 \quad\text{or}\quad \Z(N)=\{\zeta_1,\zeta_2,\zeta_3,\zeta_4,\zeta_7,\zeta_8\}=:\Z_7\,.
\end{align*}

\begin{table}
\begin{center}
\begin{tabular}{|c|l|}
\hline
$q$ & $\{j\,|\,\zeta_j\in\Z_q\}$ \\
\hline
 1  &  1 ; 2 ; 3  \\
 2  &  1 ; 4 ; 5 ; 6  \\
 3  &  1 ; 2 ; 4 ; 7  \\
 4  &  1 ; 4 ; 8 ; 9 ; 10  \\
 5  &  1 ; 4 ; 5 ; 6 ; 8 ; 11  \\
 6  &  1 ; 2 ; 3 ; 10 ; 12  \\
 7  &  1 ; 2 ; 3 ; 4 ; 7 ; 8  \\
 8  &  1 ; 2 ; 4 ; 8 ; 9 ; 10  \\
 9  &  1 ; 4 ; 5 ; 6 ; 7 ; 13  \\
 10  &  1 ; 2 ; 3 ; 14 ; 15  \\
 11  &  1 ; 2 ; 3 ; 10 ; 12 ; 16  \\
 12  &  1 ; 2 ; 4 ; 7 ; 15 ; 17  \\
 \vdots  &    \\
 277  &  1 ; 2 ; 4 ; 7 ; 15 ; 17 ; 22 ; 23 ; 24 ; 36 ; 43 ; 50  \\
\hline
\end{tabular}
\end{center}
\caption{Structure of sets $\Z_q$.}
\label{Tab.Z}
\end{table}

\begin{table}
\begin{center}
\begin{tabular}{|c|l|l|l|l|}
\hline
$j$ & $\zeta_j$ & $\{i\,|\,\zeta_i\in\D_0(\zeta_j)\}$ & $\{i\,|\,\zeta_i\in\D_1(\zeta_j)\}$ & $\{i\,|\,\psi^{\mathrm{r}}_i\in\Vect(\zeta_j)\}$ \\[2pt]
\hline
1 & $\bigl(\begin{smallmatrix}0\\0\end{smallmatrix}\bigr)$ & 1;4 & 2 & 0 \\[2pt]
2 & $\bigl(\begin{smallmatrix}01\\10\end{smallmatrix}\bigr)$ & 1;5 & 7 & 0;1 \\[2pt]
3 & $\bigl(\begin{smallmatrix}02\\20\end{smallmatrix}\bigr)$ & 1;6 & 1;4 & 0;2 \\[2pt]
4 & $\bigl(\begin{smallmatrix}1\\1\end{smallmatrix}\bigr)$ & 1;8 & 3 & 0 \\[2pt]
5 & $\bigl(\begin{smallmatrix}102\\201\end{smallmatrix}\bigr)$ & 1;9 & 1;10 & 0;3 \\[2pt]
6 & $\bigl(\begin{smallmatrix}10\\01\end{smallmatrix}\bigr)$ & 1;10 & 12 & 0;4 \\[2pt]
7 & $\bigl(\begin{smallmatrix}0102\\2010\end{smallmatrix}\bigr)$ & 1;6;11 & 1;4;8 & 0;2;3 \\[2pt]
8 & $\bigl(\begin{smallmatrix}2\\2\end{smallmatrix}\bigr)$ & 1 & 1 & 0 \\[2pt]
9 & $\bigl(\begin{smallmatrix}2010\\0102\end{smallmatrix}\bigr)$ & 1;2;3 & 14 & 0;5;6 \\[2pt]
10 & $\bigl(\begin{smallmatrix}201\\102\end{smallmatrix}\bigr)$ & 1;7 & 15 & 0;6 \\[2pt]
11 & $\bigl(\begin{smallmatrix}20\\02\end{smallmatrix}\bigr)$ & 1;2 & 16 & 0;5 \\[2pt]
12 & $\bigl(\begin{smallmatrix}0201\\1020\end{smallmatrix}\bigr)$ & 1;13 & 17 & 0;1;6 \\[2pt]
\vdots &  &  &  &  \\
54 & $\bigl(\begin{smallmatrix}0010201010201\\1020101020100\end{smallmatrix}\bigr)$ & 1;18;19;56 & 1;22;23;41 & 0;1;2 \\[2pt]
55 & $\bigl(\begin{smallmatrix}00102010201\\10201020100\end{smallmatrix}\bigr)$ & 1;18;19;37 & 1;13;22;23 & 0;1;2 \\[2pt]
56 & $\bigl(\begin{smallmatrix}10010201020\\02010201001\end{smallmatrix}\bigr)$ & 1;25;26;52 & 1;27;28;29 & 0;3;4 \\[2pt]
\hline
\end{tabular}
\end{center}
\caption{Possible elements of sets $\Z(n)$. The table shows the elements $\zeta_j$ and the structure of sets $\D_0(\zeta_j),\D_1(\zeta_j)$. It also shows the structure of sets $\Vect(\zeta_j)$, cf.\ Sect.~\ref{Sect.Range}.}
\label{Tab.zeta}
\end{table}

The calculation continues in the same way for $k=3$. For each $q\in\{4,5,6,7\}$, we substitute the set $\Z_q$ into~\eqref{Z(N)Z(n)} for $\Z(n)$. Evaluation of $\Z(N)$ requires sets $\D_0(\zeta_j)$ and $\D_1(\zeta_j)$ for $j=1,\ldots,12$. Some of them are known from our previous calculation ($j=1,\ldots,7$), others can be calculated similarly now ($j=8,\ldots,12$). In this way we find eight sets $\Z(N)$, namely
\begin{equation}\label{eight}
\begin{array}{ccccc}
\Z(n)=\Z_4 & \quad\Rightarrow\quad & \Z(N)=\Z_7 & \quad\text{or}\quad & \Z(N)=\Z_{10}\,, \\
\Z(n)=\Z_5 & \quad\Rightarrow\quad & \Z(N)=\Z_8 & \quad\text{or}\quad & \Z(N)=\Z_{11}\,, \\
\Z(n)=\Z_6 & \quad\Rightarrow\quad & \Z(N)=\Z_9 & \quad\text{or}\quad & \Z(N)=\Z_{12}\,, \\
\Z(n)=\Z_7 & \quad\Rightarrow\quad & \Z(N)=\Z_5 & \quad\text{or}\quad & \Z(N)=\Z_7\,.
\end{array}
\end{equation}
Their structure is shown in Table~\ref{Tab.Z}.

In the next step we use Lemma~\ref{Z(N0),Z(N1)} again. The eight sets found in~\eqref{eight} should be plugged in equations~\eqref{Z(N)Z(n)}, which will lead to sets $\Z(N)$ for $N$ having $5$-digit representations ($k=4$). Note, however, that the eight sets obtained in~\eqref{eight} are of two types:
\begin{itemize}
\item three of them ($\Z_7$ occurring $2$ times and $\Z_5$) had been found in earlier steps;
\item five of them ($\Z_8,\Z_9,\Z_{10},\Z_{11},\Z_{12}$) are ``new'' -- they have just appeared in the calculation for the first time.
\end{itemize}
Since sets $\Z_5$ and $\Z_7$ have been already explored, there is no need to apply equations~\eqref{Z(N)Z(n)} on them again. It suffices to use formula~\eqref{Z(N)Z(n)} only with $\Z(n)=\Z_q$ for such $\Z_q$ that promise potential \emph{new} results, i.e., $q=8,9,10,11,12$. Henceforth we will proceed similarly -- we will always put aside those sets $\Z_q$ that reappear after having been explored in earlier stages of the procedure, and queue only the ``new'' ones for further use in~\eqref{Z(N)Z(n)}.

\begin{table}
\begin{center}
\begin{tabular}{|c|l|l||c|l|l|}
\hline
$k$ & new $\Z_q$ found & new $\zeta_j$ found & $k$ & new $\Z_q$ found & new $\zeta_j$ found \\
\hline
0 & $\Z_1$ & $\zeta_1,\zeta_2,\zeta_3$ & 15 & $\Z_{201},\ldots,\Z_{221}$ & $\zeta_{56}$ \\
1 & $\Z_2,\Z_3$ & $\zeta_4,\zeta_5,\zeta_6,\zeta_7$ & 16 & $\Z_{222},\ldots,\Z_{245}$ & none \\ \cline{6-6}
2 & $\Z_4,\Z_5,\Z_6,\Z_7$ & $\zeta_8,\zeta_9,\zeta_{10},\zeta_{11},\zeta_{12}$ & 17 & $\Z_{246},\ldots,\Z_{260}$ & \\
3 & $\Z_8,\Z_9,\Z_{10},\Z_{11},\Z_{12}$ & $\zeta_{13},\zeta_{14},\zeta_{15},\zeta_{16},\zeta_{17}$ & 18 & $\Z_{261},\ldots,\Z_{271}$ & \\
4 & $\Z_{13},\ldots,\Z_{21}$ & $\zeta_{18},\ldots,\zeta_{24}$ & 19 & $\Z_{272},\Z_{273}$ & \\
5 & $\Z_{22},\ldots,\Z_{32}$ & $\zeta_{25},\ldots,\zeta_{30}$ & 20 & $\Z_{274},\Z_{275}$ & \\
\vdots &  &  & 21 & $\Z_{276},\Z_{277}$ & \\
14 & $\Z_{179},\ldots,\Z_{200}$ & $\zeta_{54},\zeta_{55}$ & 22 & none & \\
\hline
\end{tabular}
\end{center}
\caption{Progress of the calculation. Sets $\Z_q$ expressed in terms of $\zeta_j$ can be found in Tab.~\ref{Tab.Z}. Elements $\zeta_j$ are listed in Tab.~\ref{Tab.zeta}.}
\label{Tab.proc}
\end{table}
The progress of the calculation is illustrated with Table~\ref{Tab.proc}. We see that when $k$ reaches the value $22$, no new set $\Z_q$ is found. In other words, all sets $\Z(N)$ obtained by equations~\eqref{Z(N)Z(n)} for $k=22$ have been already found (and explored) earlier. The search is then completed.
We conclude: there exist altogether $277$ sets $\Z_1,\ldots,\Z_{277}$ such that for any $n\in\N$, we have $\Z(n)=\Z_q$ for a certain $q\in\{1,\ldots,277\}$.

Since every $\Z_q$ consists of elements $\zeta_j$ for $j=1,\ldots,56$ (note that new elements $\zeta_j$ stop appearing already at $k=16$), the set $\Z_{\mathrm{super}}:=\{\zeta_1,\zeta_2,\zeta_3,\ldots,\zeta_{55},\zeta_{56}\}$ obviously satisfies $\Z_q\subset\Z_{\mathrm{super}}$ for all $q=1,\ldots,277$.

\end{proof}

The search for sets $\Z_q$ and elements $\zeta_j$ can be in principle carried out completely using pen and paper, but since the procedure is lengthy and cumbersome, it is more convenient to use a computer, which also helps to avoid mistakes. Below we provide a pseudocode of the algorithm. Note that some variables used in the pseudocode are of special types, such as sets or pairs of strings.

\begin{algorithmic}[1]
\State calculate $\Z(1)$ using formula~\eqref{Z(n)} for $R=3$ and $K=0$
\State $a\gets$ number of elements of $\Z(1)$
\State $Z[1]\gets$ the set $\{1,\ldots,a\}$
\State $M\gets 1$
\State $M\_old\gets 0$
\State $M\_new\gets 1$
\For{$j=1,\ldots,a$}
   \State $zeta[j]\gets$ $j$-th element of $\Z(1)$
\EndFor
\State $a\_old\gets 0$
\State $a\_new\gets a$

\While{$M\_old<M\_new$}

   \For{$d=0,1$}
       \For{$j=a\_old+1,\ldots,a\_new$}
           \State $z\gets zeta[j]$
           \State $D\gets \emptyset$
           \State calculate $\D_d(z)$ using formula~\eqref{D0D1}
           \For{every $x\in\D_d(z)$}
               \If {an $i\in\{1,\ldots,a\}$ satisfies $x=zeta[i]$}
                  \State $D\gets D\cup\{i\}$
               \Else
                  \State $D\gets D\cup\{a+1\}$
                  \State $zeta[a+1]\gets x$
                  \State $a\gets a+1$
               \EndIf
           \EndFor
           \State $D[d,j]\gets D$
       \EndFor
   \EndFor
   \State $a\_old\gets a\_new$
   \State $a\_new\gets a$

   \For{$q=M\_old+1,\ldots,M\_new$}
		   \For{$d=0,1$}
		       \State $D\gets \emptyset$
		       \For{every $j\in Z[q]$}
               \State $D\gets D\cup D[d,j]$
			     \EndFor
	         \If {a $j\in\{1,\ldots,M\}$ satisfies $D=Z[j]$}
			        \State $delta[q,d]\gets j$
			     \Else
				      \State $delta[q,d]\gets M+1$
	            \State $Z[M+1]\gets D$
	            \State $M\gets M+1$
           \EndIf
       \EndFor
   \EndFor
   \State $M\_old\gets M\_new$
   \State $M\_new\gets M$				

\EndWhile
\end{algorithmic}

When the algorithm ends, final values of variables have the meanings as follows.
\begin{itemize}
\item $a$: cardinality of $\Z_{\mathrm{super}}$;
\item $zeta[j]$ for $j=1,\ldots,a$: $\zeta_j$ (cf.\ Table~\ref{Tab.zeta});
\item $D[d,j]$ for $d=0,1$, $j=1,\ldots,a$: the set $\{i\,|\,\zeta_i\in\D_d(\zeta_j)\}$ (cf.\ Table~\ref{Tab.zeta});
\item $M$: cardinality of $\{\Z(n)\;|\;n\in\N\}$ (cf.\ Table~\ref{Tab.Z});
\item $Z[q]$ for $q=1,\ldots,M$: the set $\{j\;|\;\zeta_j\in\Z_q\}$ (cf.\ Table~\ref{Tab.Z});
\item $delta[q,d]$ for $q=1,\ldots,M$,  $d=0,1$: $\delta(q,d)$ (cf.\ Sect.~\ref{Sect.Eval.AC}, Table~\ref{Tab.sigma}).
\end{itemize}

\section{Range of the abelian complexity}\label{Sect.Range}

The aim of this section is twofold:
\begin{itemize}
\item We illustrate that the result of Section~\ref{Sect.Z} allows to examine the image of the abelian complexity function, as well as the balance bound of $\t$.
\item We express $\Vect(\zeta_j)$ for $j=1,\ldots,56$, and we associate each $\Z_q$ for $q=1,\ldots,277$ with a certain value of the abelian complexity function. This result will be useful later in Section~\ref{Sect.Eval.AC} for an explicit evaluation of $\AC(n)$.
\end{itemize}

Recall that for any $n\in\N$, the value of the abelian complexity function $\AC(n)$ can be determined by formula~\eqref{ACP} if the set $\Z(n)$ is known. We have not found an explicit relation between $n$ and $\Z(n)$ yet, and thus we cannot use~\eqref{ACP} as it is. Nevertheless, we have demonstrated in Section~\ref{Sect.Z} that $\Z(n)=\Z_q$ for a certain $q\in\{1,\ldots,277\}$, which already gives us the possibility to restrict the range of $\AC(n)$.

The use of formula~\eqref{ACP} requires $\Vect(\zeta)$ for $\zeta\in\Z(n)$. Therefore, we shall express $\Vect(\zeta_j)$ for $j=1,\ldots,56$.
If equation~\eqref{P} is applied on $\zeta_1$, it gives
\begin{equation*}
\begin{split}
\Vect(\zeta_1)&=\Vect\begin{pmatrix}0\\0\end{pmatrix}=\left\{\Psi(s)-\Psi(r)\;\left|\;\text{$r$ is a prefix of $0$, $s$ is a prefix of $0$, $|s|=|r|$}\right.\right\}\\
&=\{(0,0,0)\}.
\end{split}
\end{equation*}
Similarly, for $\zeta_2$ and $\zeta_3$ we get
\begin{equation*}
\begin{split}
\Vect(\zeta_2)&=\Vect\begin{pmatrix}01\\10\end{pmatrix}=\left\{\Psi(s)-\Psi(r)\,\left|\,\text{$r$ is a prefix of $01$, $s$ is a prefix of $10$, $|s|=|r|$}\right.\right\}\\
&=\{(-1,1,0),(0,0,0)\},
\end{split}
\end{equation*}
\begin{equation*}
\begin{split}
\Vect(\zeta_3)&=\Vect\begin{pmatrix}02\\20\end{pmatrix}=\left\{\Psi(s)-\Psi(r)\,\left|\,\text{$r$ is a prefix of $02$, $s$ is a prefix of $20$, $|s|=|r|$}\right.\right\}\\
&=\{(-1,0,1),(0,0,0)\}.
\end{split}
\end{equation*}
Performing the same calculation for every value of $j$, we find
that for all $j=1,\ldots,56$, we have $\Vect(\zeta_j)\subset\{\psi^{\mathrm{r}}_0,\psi^{\mathrm{r}}_1,\ldots,\psi^{\mathrm{r}}_8\}$, where
$$
\begin{array}{lclcl}
\psi^{\mathrm{r}}_0=(0,0,0)\,; &\quad& \psi^{\mathrm{r}}_1=(-1,1,0)\,; &\quad& \psi^{\mathrm{r}}_2=(-1,0,1)\,; \\
\psi^{\mathrm{r}}_3=(0,-1,1)\,; &\quad& \psi^{\mathrm{r}}_4=(1,-1,0)\,; &\quad& \psi^{\mathrm{r}}_5=(1,0,-1)\,; \\
\psi^{\mathrm{r}}_6=(0,1,-1)\,; &\quad& \psi^{\mathrm{r}}_7=(-1,2,-1)\,; &\quad& \psi^{\mathrm{r}}_8=(-1,-1,2)\,.
\end{array}
$$
The structure of sets $\Vect(\zeta_j)$ in terms of $\psi^{\mathrm{r}}_0,\ldots,\psi^{\mathrm{r}}_8$ is partly shown in Table~\ref{Tab.zeta}.

According to~\eqref{Z1..277}, for every $n\in\N$ there is a $q\in\{1,\ldots,277\}$ such that $\Z(n)=\Z_q$. By virtue of equation~\eqref{ACP}, we have
\begin{equation}\label{ACtau}
\Z(n)=\Z_q \qquad\Rightarrow\qquad \AC(n)=\#\bigcup_{\zeta\in\Z_q}\Vect(\zeta)\,.
\end{equation}
The expression $\#\bigcup_{\zeta\in\Z_q}\Vect(\zeta)$ depends only on $q$. Therefore, it is convenient to introduce the function
\begin{equation}\label{Aj}
\tau(q):=\#\bigcup_{\zeta\in\Z_q}\Vect(\zeta)
\end{equation}
for $q=1,\ldots,277$.
Evaluation of $\tau$ is a straightforward task, consisting of combining data from Tables~\ref{Tab.Z} and \ref{Tab.zeta}:
\begin{enumerate}
\item Table~\ref{Tab.Z} shows the structure of $\Z_q$ in terms of $\zeta_j$.
\item Table~\ref{Tab.zeta} contains $\Vect(\zeta_j)$ for $\zeta_j$.
\end{enumerate}
For example, for $q=1$ we obtain
$$
\bigcup_{\zeta\in\Z_1}\Vect(\zeta)\stackrel{(a)}{=}\Vect(\zeta_1)\cup \Vect(\zeta_2)\cup \Vect(\zeta_3)\stackrel{(b)}{=}\left\{\psi^{\mathrm{r}}_0\right\}\cup\left\{\psi^{\mathrm{r}}_0,\psi^{\mathrm{r}}_1\right\}\cup\left\{\psi^{\mathrm{r}}_0,\psi^{\mathrm{r}}_2\right\}=\left\{\psi^{\mathrm{r}}_0,\psi^{\mathrm{r}}_1,\psi^{\mathrm{r}}_2\right\},
$$
where we used data from Table~\ref{Tab.Z} at (a) and data from Table~\ref{Tab.zeta} at (b).
Hence $\tau(1)=\#\bigcup_{\zeta\in\Z_1}\Vect(\zeta)=3$.
We proceed similarly for $q=2,\ldots,277$.
\begin{table}
\begin{center}
\begin{tabular}{|c|c||c|c||c|c||c|c||c|c||c|c||c|c|}
\hline
$q$ & $\tau(q)$ & $q$ & $\tau(q)$ & $q$ & $\tau(q)$ & $q$ & $\tau(q)$ & $q$ & $\tau(q)$ & $q$ & $\tau(q)$ & $q$ & $\tau(q)$ \\
\hline
 1  &  3  &  7  &  4  &  13  &  4  &  19  &  4  &  25  &  4  &  31  &  4  &  37  &  4  \\
 2  &  3  &  8  &  4  &  14  &  4  &  20  &  4  &  26  &  4  &  32  &  4  &  38  &  4  \\
 3  &  4  &  9  &  4  &  15  &  3  &  21  &  4  &  27  &  4  &  33  &  5  &  39  &  4  \\
 4  &  3  &  10  &  3  &  16  &  4  &  22  &  5  &  28  &  4  &  34  &  5  &  40  &  4  \\
 5  &  4  &  11  &  4  &  17  &  4  &  23  &  5  &  29  &  4  &  35  &  4  &  41  &  3  \\
 6  &  4  &  12  &  4  &  18  &  4  &  24  &  3  &  30  &  4  &  36  &  4  &  42  &  4  \\
\hline
\end{tabular}
\end{center}
\caption{The function $\tau$ (partial list for $q=1,\ldots,42$).}
\label{Tab.C}
\end{table}
The values $\tau(q)$ obtained by the calculation are listed in Table~\ref{Tab.C}. The list is only partial by reason of saving space, however, in the full list we could see that $\tau(q)\in\{3,4,5,6,7\}$ for all $q\in\{1,\ldots,277\}$. Consequently, $\AC(n)\in\{3,4,5,6,7\}$ for all $n\in\N$.

Moreover, one can show similarly that for any $q=1,\ldots,277$, the absolute values of the elements of the difference vector $\psi^{\mathrm{r}}_i-\psi^{\mathrm{r}}_j$ for $\psi^{\mathrm{r}}_i,\psi^{\mathrm{r}}_j\in\bigcup_{\zeta\in\Z_q}\Vect(\zeta)$ do not exceed $2$.
This fact implies that the Tribonacci word is $2$-balanced.
However, this bound as well as the result $\AC(n)\in\{3,4,5,6,7\}$ are not new; they have been derived already in~\cite{RSZ}.

\section{Evaluation of the abelian complexity}\label{Sect.Eval.AC}

As we have noticed at the beginning of Section~\ref{Sect.Range}, there is one last step to do before we can calculate $\AC(n)$ explicitly. Namely, we need to associate each $n\in\N$ with the number $q\in\{1,\ldots,277\}$ such that $\Z(n)=\Z_q$. The task will be solved in this section.

We introduce the map (the ``transition function'') $\delta(q,d)$ for $q\in\{1,\ldots,277\}$ and $d\in\{0,1\}$ by the formula
\begin{equation}\label{tau}
\delta(q,d)=\ell \quad\Leftrightarrow\quad \bigcup_{\zeta\in\Z_q}\D_d(\zeta)=\Z_\ell\,.
\end{equation}
Then we extend its domain to the value $q=0$ by setting
\begin{equation}\label{tauq=0}
\delta(0,0)=0, \qquad \delta(0,1)=1.
\end{equation}
Values $\delta(q,d)$ for $q=1,\ldots,277$ can be naturally gathered in the course of the calculations carried out in the proof of Theorem~\ref{Subsets}. For example, we have $\bigcup_{\zeta\in\Z_1}\D_0(\zeta)=\Z_2$ and $\bigcup_{\zeta\in\Z_1}\D_1(\zeta)=\Z_3$; hence $\delta(1,0)=2$ and $\delta(1,1)=3$. The values can be then tabulated; see Table~\ref{Tab.sigma}.

\begin{table}
\begin{center}
\begin{tabular}{|c|cc||c|cc||c|cc||c|cc|}
\hline
$q$ & $\delta(q,0)$ & $\delta(q,1)$ & $q$ & $\delta(q,0)$ & $\delta(q,1)$ & $q$ & $\delta(q,0)$ & $\delta(q,1)$ & $q$ & $\delta(q,0)$ & $\delta(q,1)$ \\
\hline
 1  &  2  &  3  &  11  &  16  &  21  &  21  &  17  &  13  &  31  &  22  &  23  \\
 2  &  4  &  6  &  12  &  17  &  13  &  22  &  33  &  39  &  32  &  17  &  13  \\
 3  &  5  &  7  &  13  &  22  &  23  &  23  &  34  &  40  &  33  &  47  &  54  \\
 4  &  7  &  10  &  14  &  23  &  28  &  24  &  7  &  41  &  34  &  48  &  55  \\
 5  &  8  &  11  &  15  &  24  &  29  &  25  &  13  &  42  &  35  &  13  &  56  \\
 6  &  9  &  12  &  16  &  14  &  19  &  26  &  35  &  43  &  36  &  49  &  57  \\
 7  &  5  &  7  &  17  &  25  &  30  &  27  &  14  &  19  &  37  &  14  &  19  \\
 8  &  13  &  18  &  18  &  26  &  31  &  28  &  36  &  44  &  38  &  14  &  19  \\
 9  &  14  &  19  &  19  &  27  &  32  &  29  &  37  &  45  &  39  &  50  &  58  \\
 10  &  15  &  20  &  20  &  22  &  23  &  30  &  38  &  46  &  40  &  17  &  13  \\
\hline
\end{tabular}
\end{center}
\caption{Values of the transition function $\delta(q,d)$ (partial list for $q=1,\ldots,40$).}
\label{Tab.sigma}
\end{table}

The transition function $\delta(q,d)$ allows to determine $\Z(n)$ for any $n\in\N$, as we demonstrate in the following proposition.

\begin{proposition}\label{Zndelta}
If $\langle n \rangle_T =d_k d_{k-1}\cdots d_1d_0$, then
\begin{equation}\label{Zntau}
\Z(n)=\Z_q \qquad\text{for}\quad q=\delta(0,d_k d_{k-1}\cdots d_1d_0)\,,
\end{equation}
where the symbol $\delta(0,d_k d_{k-1}\cdots d_1d_0)$ is a shorthand for $\delta(\delta(\cdots\delta(\delta(0,d_k),d_{k-1})\cdots,d_1),d_0)$ (cf.\ Sect.~\ref{Sect.Preliminaries}).
\end{proposition}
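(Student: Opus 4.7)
The plan is to prove Proposition \ref{Zndelta} by induction on the length of the representation $\langle n\rangle_T = d_k d_{k-1}\cdots d_0$, using the transformation formula \eqref{n,delta} as the engine of the induction and the definition of $\delta$ in \eqref{tau} to translate the union over $\D_d(\zeta)$ into a single index.

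First, I would dispose of the base case. When the representation consists of a single digit, normality forces $d_0 = 1$ (so $n=1$), and by the definition of $\Z_1$ in the proof of Theorem~\ref{Subsets} we have $\Z(1) = \Z_1$. On the other hand, the extension \eqref{tauq=0} gives $\delta(0,1) = 1$, so both sides agree. I would also note that the ``extended'' notation $\delta(0,d_k\cdots d_0)$ is insensitive to leading zeros on $\langle n\rangle_T$: since $\delta(0,0) = 0$ by \eqref{tauq=0}, any prefix of zeros is absorbed before the first $1$ is read, which matches the fact that the normal $T$-representation is defined up to leading zeros (as remarked after Table~\ref{T expansions}).

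Next, the inductive step. Assume the claim holds for all representations of length $k$. Let $N$ have representation $\langle N\rangle_T = d_k\cdots d_1 d_0$ of length $k+1$. Write $\langle n\rangle_T = d_k\cdots d_1$, which is itself a valid (possibly with leading zeros) normal $T$-representation of a smaller integer $n$, so that $\langle N\rangle_T = \langle n\rangle_T d_0$. The transformation formula \eqref{n,delta} yields
\[
\Z(N) \;=\; \bigcup_{\zeta\in\Z(n)} \D_{d_0}(\zeta).
\]
By the induction hypothesis, $\Z(n) = \Z_{q'}$ where $q' = \delta(0, d_k\cdots d_1)$. Substituting and applying the definition \eqref{tau} of $\delta$,
\[
\Z(N) \;=\; \bigcup_{\zeta\in\Z_{q'}} \D_{d_0}(\zeta) \;=\; \Z_{\delta(q', d_0)} \;=\; \Z_{\delta(\delta(0, d_k\cdots d_1), d_0)} \;=\; \Z_{\delta(0, d_k\cdots d_1 d_0)},
\]
which is exactly \eqref{Zntau} for $N$.

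The only real subtlety will be justifying that the ``prefix'' $d_k\cdots d_1$ of a normal representation is itself normal, so that the inductive hypothesis applies. This is immediate because normality of a $T$-representation amounts to avoiding a forbidden suffix pattern (namely blocks that the greedy algorithm would rewrite using a larger Tribonacci number), and this property is preserved under deleting the trailing digit. Beyond that, the argument is purely a chase of definitions: the whole content is packaged in Lemma~\ref{Z(N0),Z(N1)} (equivalently, the transformation formula \eqref{n,delta}) and in \eqref{tau}--\eqref{tauq=0}, so no further machinery is needed.
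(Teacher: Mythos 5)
Your proposal is correct and follows essentially the same route as the paper: induction on the number of digits of $\langle n\rangle_T$, with the base case $n=1$ handled via $\delta(0,1)=1$, the inductive step driven by the transformation formula~\eqref{n,delta} combined with the definition~\eqref{tau} of $\delta$, and leading zeros absorbed through $\delta(0,0)=0$. The paper treats the leading-zero case in a separate final paragraph rather than folding it into the base case, and (like you) takes for granted that truncating the last digit of a normal $T$-representation yields a normal representation, so your extra remark on that point is a harmless bonus.
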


\begin{proof}
At first, let us assume that $d_k=1$; the case $d_k=0$ will be treated in the end. We prove the statement by induction on $k$.

I.\quad Let $k=0$. Then $\langle n \rangle_T =1$, i.e., $n=1$. We have $\delta(0,d_0)=\delta(0,1)=1$ from Eq.~\eqref{tauq=0}, thus formula~\eqref{Zntau} gives $\Z(1)=\Z_1$. This is obviously true with regard to the definition of $\Z_1$.

II.\quad Let the formula~\eqref{Zntau} be valid for numbers with normal $T$-representations having at most $k$ digits. Let a number $n$ have a $(k+1)$-digit representation $\langle n \rangle_T =d_k d_{k-1}\cdots d_1d_0$. The formula~\eqref{Zntau} is valid for $\langle n' \rangle_T =d_k d_{k-1}\cdots d_1$ (because $\langle n'\rangle_T$ has only $k$ digits), and hence $\Z(n')=\Z_j$ for $j=\delta(0,d_k d_{k-1}\cdots,d_1)$.
From Eq.~\eqref{n,delta} we have $\Z(n)=\bigcup_{\zeta\in\Z_j}\D_{d_0}(\zeta)$. Consequently, with regard to the definition of the transition function $\delta$ (cf.~\eqref{tau}), the equation $\Z(n)=\Z_q$ holds for $q=\delta(j,d_0)$. Combining these two facts, we obtain
$$
\Z(n)=\Z_q \quad\text{for}\quad q=\delta(\delta(0,d_k d_{k-1}\cdots,d_1),d_0)=\delta(0,d_k d_{k-1}\cdots d_1d_0)\,,
$$
which completes the inductive step.

It suffices to deal with the case $d_k=0$, i.e., $\langle n\rangle_T=00\cdots0d_\ell d_{\ell-1}\cdots d_0$ for an $\ell<k$ and $d_\ell=1$. The initial block of zeros in the representation can be omitted, i.e., we can write $\langle n\rangle_T=d_\ell d_{\ell-1}\cdots d_0$ for $d_\ell=1$. Then, due to previous considerations, $\Z(n)=\Z_q$ for $q=\delta(0,d_\ell d_{\ell-1}\cdots,d_0)$. At the same time, since $\delta(0,0)=0$ from Eq.~\eqref{tauq=0}, we trivially get
$\delta(0,0\cdots0)=0$. Hence
\begin{equation*}
\begin{split}
q&=\delta(0,d_\ell d_{\ell-1}\cdots,d_0)=\delta(\delta(0,0\cdots0),d_\ell d_{\ell-1}\cdots,d_0)=\delta(0,0\cdots0d_\ell d_{\ell-1}\cdots d_1d_0) \\
&=\delta(0,d_k d_{k-1}\cdots d_{\ell+1}d_\ell d_{\ell-1}\cdots d_1d_0)\,.
\end{split}
\end{equation*}
\end{proof}

Proposition~\ref{Zndelta} associates any $n\in\N$ with a $q\in\{1,\ldots,277\}$ such that $\Z(n)=\Z_q$. Formula~\eqref{ACtau} together with~\eqref{Aj} then gives the value of the abelian complexity as $\tau(q)$, where $\tau$ is the function tabulated in Table~\ref{Tab.C}.
Combining these two results, we get
\begin{theorem}\label{MainThm}
For any $n\in\N$ we have
\begin{equation}\label{ACfinal}
\AC(n)=\tau(\delta(0,\langle n \rangle_T))\,.
\end{equation}
\end{theorem}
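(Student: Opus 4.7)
The plan is to observe that Theorem~\ref{MainThm} is essentially a direct composition of three results that have already been established in the excerpt, so there is no real new mathematical content to prove---only a chaining together of previously proved identities. Consequently the proof should be quite short.

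First, I would fix an arbitrary $n\in\N$ and apply Proposition~\ref{Zndelta} to obtain $\Z(n)=\Z_q$ where $q=\delta(0,\langle n\rangle_T)$; this is the nontrivial ingredient and has been handled inductively on the length of the normal $T$-representation in the preceding proof. Next, I would invoke formula~\eqref{ACtau}, which says that whenever $\Z(n)=\Z_q$ one has $\AC(n)=\#\bigcup_{\zeta\in\Z_q}\Vect(\zeta)$. Finally, by the definition~\eqref{Aj} of $\tau$, the right-hand side equals $\tau(q)$, and substituting $q=\delta(0,\langle n\rangle_T)$ yields the claimed formula $\AC(n)=\tau(\delta(0,\langle n\rangle_T))$.

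Since each of these three steps is either a cited proposition or a defining equation, there is no genuine obstacle; the only thing to be slightly careful about is that $\delta(0,\langle n\rangle_T)$ is understood via the iterated-transition convention fixed in Proposition~\ref{Zndelta} (and compatible with the DFAO reading of Section~\ref{Sect.Preliminaries}), and that the case of a leading-zero representation is absorbed by the stipulation $\delta(0,0)=0$ from~\eqref{tauq=0}. Beyond that, the statement follows in one line.
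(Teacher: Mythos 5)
Your proposal is correct and matches the paper's own argument exactly: the paper likewise obtains Theorem~\ref{MainThm} by combining Proposition~\ref{Zndelta} with formula~\eqref{ACtau} and the definition~\eqref{Aj} of $\tau$. Nothing further is needed.
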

Obviously, Theorem~\ref{MainThm} together with Tables~\ref{Tab.sigma} and~\ref{Tab.C} allows to determine $\AC(n)$ for any $n\in\N$.

Equation~\eqref{ACfinal} implies that $(\AC(n))_{n\in\N}$ is a $T$-automatic sequence. It is generated by the DFAO $(Q,\Sigma,\delta,q_0,\Delta,\tau)$, where
\begin{itemize}
\item $Q=\{0,1,\ldots,277\}$ is the set of states;
\item $\Sigma=\{0,1\}$ is the input alphabet;
\item $\delta$ is the state-transition function, cf.\ Table~\ref{Tab.sigma};
\item $q_0=0$ is the initial state;
\item $\Delta=\{3,4,5,6,7\}$ is the output alphabet;
\item $\tau$ is the output function, specified in Table~\ref{Tab.C}.
\end{itemize}

The number of steps needed for evaluating $\AC(n)$ is proportional to the number of digits in the normal $T$-representation $\langle n \rangle_T$. 
The $T_j$ grow roughly exponentially ($T_j\approx\beta^j$, where $\beta\approx1.84$ is the root of $x^3=x^2+x+1$), and hence the number of digits is $k\sim\log n$. The evaluation of $\AC(n)$ using formula~\eqref{ACfinal} thus needs $\mathcal{O}(\log n)$ operations. It is therefore fast even for large values of $n$.

\begin{example}\label{Ex.2013}
Let us calculate $\AC(n)$ for $n=2013$. Since
$$
(T_j)_{j=3}^{\infty}=(1,2,4,7,13,24,44,81,149,274,504,927,1705,3136,\ldots)\,,
$$
we have $\langle2013\rangle_T=1001000101011$, cf.~\eqref{TribExp}. Starting from the initial state $0$ and applying the state-transition function $\delta$ step by step, we obtain
$$
0\stackrel{1}{\rightarrow}1\stackrel{0}{\rightarrow}2\stackrel{0}{\rightarrow}4\stackrel{1}{\rightarrow}10\stackrel{0}{\rightarrow}15\stackrel{0}{\rightarrow}24\stackrel{0}{\rightarrow}7\stackrel{1}{\rightarrow}7\stackrel{0}{\rightarrow}5\stackrel{1}{\rightarrow}11\stackrel{0}{\rightarrow}16\stackrel{1}{\rightarrow}19\stackrel{1}{\rightarrow}32\,,
$$
thus $\delta(0,1001000101011)=32$. Hence $\AC(2013)=\tau(32)=4$.
\end{example}

\section{Reduction of states of the automaton}\label{Sect.Simplified}

In Section~\ref{Sect.Eval.AC}, we have found a DFAO that evaluates the abelian complexity function of the Tribonacci word. The automaton has $278$ states (including the initial state $q_0=0$). In this section we will show that the size of the automaton can be substantially reduced.

Obviously, if $p,q\in\{1,\ldots,277\}$ ($p\neq q$) satisfy
$$
\delta(p,0)=\delta(q,0) \quad\wedge\quad \delta(p,1)=\delta(q,1) \quad\wedge\quad \tau(p)=\tau(q)\,,
$$
then the states $p$ and $q$ can be merged into a single state.

We use this fact to reduce the size of our automaton; note that the procedure has to be repeated several times until no more mergeable vertices exist. Eventually we obtain an equivalent DFAO that has just $68$ vertices. Then we update the labels of states of the new automaton to the numbers $0,1,\ldots,67$ (where $0$ corresponds to the initial state) and recalculate accordingly the values of the transition function and the output function. The result is summarized in Table~\ref{Tab.red}.
\begin{table}
\begin{center}
\begin{tabular}{|c|cc|c||c|cc|c||c|cc|c|}
\hline
$q$ & $\delta(q,0)$ & $\delta(q,1)$ & $\tau(q)$ & $q$ & $\delta(q,0)$ & $\delta(q,1)$ & $\tau(q)$ & $q$ & $\delta(q,0)$ & $\delta(q,1)$ & $\tau(q)$ \\
\hline
 0  &  0  &  1  &  -  &  23  &  26  &  24  &  4  &  46  &  51  &  38  &  6  \\
 1  &  2  &  3  &  3  &  24  &  28  &  16  &  5  &  47  &  52  &  38  &  6  \\
 2  &  4  &  6  &  3  &  25  &  29  &  32  &  5  &  48  &  53  &  55  &  5  \\
 3  &  5  &  3  &  4  &  26  &  30  &  33  &  4  &  49  &  54  &  56  &  4  \\
 4  &  3  &  9  &  3  &  27  &  31  &  29  &  4  &  50  &  37  &  57  &  5  \\
 5  &  7  &  6  &  4  &  28  &  34  &  22  &  5  &  51  &  58  &  38  &  6  \\
 6  &  8  &  10  &  4  &  29  &  35  &  38  &  5  &  52  &  59  &  22  &  6  \\
 7  &  11  &  10  &  4  &  30  &  11  &  23  &  4  &  53  &  24  &  23  &  5  \\
 8  &  12  &  6  &  4  &  31  &  19  &  39  &  5  &  54  &  43  &  53  &  4  \\
 9  &  13  &  11  &  3  &  32  &  36  &  11  &  5  &  55  &  60  &  63  &  5  \\
 10  &  14  &  11  &  4  &  33  &  37  &  40  &  5  &  56  &  61  &  34  &  5  \\
 11  &  15  &  16  &  4  &  34  &  41  &  10  &  6  &  57  &  62  &  58  &  5  \\
 12  &  16  &  17  &  4  &  35  &  42  &  22  &  6  &  58  &  58  &  38  &  7  \\
 13  &  4  &  18  &  3  &  36  &  25  &  18  &  5  &  59  &  58  &  32  &  7  \\
 14  &  7  &  18  &  4  &  37  &  43  &  19  &  5  &  60  &  43  &  53  &  5  \\
 15  &  19  &  22  &  5  &  38  &  36  &  27  &  5  &  61  &  19  &  64  &  5  \\
 16  &  20  &  10  &  5  &  39  &  21  &  45  &  4  &  62  &  30  &  55  &  4  \\
 17  &  21  &  11  &  4  &  40  &  44  &  41  &  5  &  63  &  48  &  58  &  5  \\
 18  &  8  &  23  &  4  &  41  &  46  &  16  &  6  &  64  &  54  &  65  &  4  \\
 19  &  24  &  10  &  5  &  42  &  47  &  32  &  6  &  65  &  66  &  51  &  5  \\
 20  &  25  &  6  &  5  &  43  &  16  &  49  &  4  &  66  &  19  &  67  &  5  \\
 21  &  12  &  18  &  4  &  44  &  30  &  50  &  4  &  67  &  54  &  63  &  4  \\
 22  &  21  &  27  &  4  &  45  &  48  &  47  &  4  &  & & & \\
\hline
\end{tabular}
\end{center}
\caption{Reduced automaton. Values of the transition function $\delta(q,d)$ and of the output function $\tau$.}
\label{Tab.red}
\end{table}
Naturally, the formula
$$
\AC(n)=\tau(\delta(0,\langle n \rangle_T))
$$
from Theorem~\ref{MainThm} also remains valid with functions $\delta$ and $\tau$ tabulated in Table~\ref{Tab.red}.
Although the reduction method we used is quite elementary, it works well in this case: one can check, using, e.g., Moore's algorithm, that the reduced automaton is already minimal.

\begin{example}
Let us calculate $\AC(n)$ for $n=2013$ using Table~\ref{Tab.red}. Since $\langle2013\rangle_T=1001000101011$ (see Ex.~\ref{Ex.2013}), we obtain
$$
0\stackrel{1}{\rightarrow}1\stackrel{0}{\rightarrow}2\stackrel{0}{\rightarrow}4\stackrel{1}{\rightarrow}9\stackrel{0}{\rightarrow}13\stackrel{0}{\rightarrow}4\stackrel{0}{\rightarrow}3\stackrel{1}{\rightarrow}3\stackrel{0}{\rightarrow}5\stackrel{1}{\rightarrow}6\stackrel{0}{\rightarrow}8\stackrel{1}{\rightarrow}6\stackrel{1}{\rightarrow}10\,.
$$
Hence $\AC(2013)=\tau(10)=4$.
\end{example}

Now we can return to the open problem proposed by Richomme, Saari and Zamboni~\cite{RSZ2} concerning the characterization of those $n$ for which $\AC(n)=c$, $c\in\{4,5,6,7\}$.
The automaton derived in this section leads to a solution. Namely, for each $c\in\{4,5,6,7\}$ it suffices to consider the deterministic finite automaton (DFA) $A_c=(Q,\Sigma,\delta,q_0,F_c)$, where $Q,\Sigma,\delta,q_0$ have the same meaning as above, and $F_c\subset Q$ is the set of accepting states, defined by the equivalence $q\in F_c\Leftrightarrow\tau(q)=c$. The automaton $A_c$ obviously accepts $\langle n\rangle_T$ if and only if $\AC(n)=c$.

We noticed that Moore's minimization algorithm reduces the size of $A_c$ for each $c\in\{3,4,5,6,7\}$. It is interesting to compare the number of states of the reduced automata; see Table~\ref{Tab.Ac}.
\begin{table}
\begin{center}
\begin{tabular}{|c||c|c|c|c|c|}
\hline
$c$ & $3$ & $4$ & $5$ & $6$ & $7$ \\
\hline
number of states & $5$ & $66$ & $66$ & $66$ & $66$ \\
\hline
\end{tabular}
\end{center}
\caption{Sizes of minimal automata accepting the sets $\{\langle n\rangle_T\;|\;\AC(n)=c\}$.}
\label{Tab.Ac}
\end{table}
The minimal automaton accepting the set $\{\langle n\rangle_T\;|\;\AC(n)=3\}$ has just $5$ states (including the initial state), whereas the minimal automata accepting $\{\langle n\rangle_T\;|\;\AC(n)=c\}$ for $c\in\{4,5,6,7\}$ have $66$ states. The disparity in sizes explain, in our opinion, why the authors of~\cite{RSZ2} could give a simple characterization of those $n$ satisfying $AC(n)=3$, whereas the question about the $n$ satisfying $AC(n)=c$ for $c\in\{4,5,6,7\}$ remained open.

\begin{figure}
\begin{center}
\begin{tikzpicture}[shorten >=1pt,node distance=2cm,on grid,auto] 
   \node[state,initial] (q_0)   {$q_0$}; 
   \node[state,accepting] (q_1) [right=of q_0] {$q_1$}; 
   \node[state] (q_3) [right=of q_1] {$q_3$}; 
   \node[state,accepting] (q_2) [above right=of q_3] {$q_2$}; 
   \node[state,accepting] (q_4) [below right=of q_3] {$q_4$}; 
    \path[->] 
    (q_0) edge node {1} (q_1)
          edge [loop above] node {0} ()
    (q_1) edge [bend left=40] node {0} (q_2)
          edge node {1} (q_3)
    (q_2) edge [bend left] node {0} (q_4)
          edge node {1} (q_3)
    (q_3) edge [loop below] node {0,1} ()
    (q_4) edge node [swap] {0} (q_3)
          edge [bend left=40] node {1} (q_1);
\end{tikzpicture}
\end{center}
\caption{The minimal automaton accepting the set $\{\langle n\rangle_T\;|\;\AC(n)=3\}$.}
\label{Autom.3}
\end{figure}

\begin{remark}\label{Rem.AC3}
The minimal automaton corresponding to $A_3$, which accepts $\langle n\rangle_T$ if and only if $\AC(n)=3$, has the transition diagram shown in Figure~\ref{Autom.3}.
Hence we obtain one more characterization of those $n$ for which $\AC(n)=3$ on top of those that were found in~\cite[Prop.~3.3]{RSZ2}, namely:
\begin{equation}\label{AC3}
\AC(n)=3 \qquad\Leftrightarrow\qquad \langle n\rangle_T \text{ is a prefix of } 100100100\cdots.
\end{equation}
\end{remark}

\section{On the abelian complexity of $m$-bonacci words for $m \geq 4$}\label{Sect.mbon}

The $m$-bonacci word is defined for any integer $m\geq2$ over the alphabet $\A=\{0,1,\ldots,m-1\}$ as the fixed point of the substitution
$$
\varphi_m:\qquad 0\mapsto01,\;1\mapsto02,\;\ldots,\;m-2\mapsto0(m-1),\;m-1\mapsto0\,.
$$
The Fibonacci word and the Tribonacci word are its special cases for $m=2$ and $m=3$, respectively.

It is easy to see that the procedure, used in previous sections to examine the abelian complexity of the Tribonacci word, can be straightforwardly applied to any $m$-bonacci word, regardless of $m$.
Indeed, to explore an $m$-bonacci word for a given $m\geq2$ by this method, it suffices to change just the constant $R$ in Example~\ref{Z(1)} from the value $3$ to $m$, and to use the $m$-bonacci representation of integers. The $m$-bonacci representation is a normal $U$-representation defined for $U_j=|\varphi_m^j(0)|$; note that the values $U_j$ satisfy
$$
U_j=\begin{cases}
2^j, & \text{if } j\in\{0,1,\ldots,m-1\}; \\
U_{j-1}+U_{j-2}+\cdots U_{j-m}, & \text{if } j\geq m.
\end{cases}
$$
On the other hand, the cardinality of $\Z_\mathrm{super}$ seems to quickly grow with $m$; thus the method ceases to be efficient for high values of $m$.

For instance, we have considered the $4$-bonacci word \seqnum{A254990} and successfully found an explicit DFAO that evaluates its abelian complexity \seqnum{A255014}; the automaton has $5665$ states ($66881$ before the reduction of states). We provide main results below. For the sake of brevity we will denote the $4$-bonacci word by the symbol $\q$. For any $n\in\N$, let $\langle n\rangle_Q$ be the $4$-bonacci representation of $n$, constructed for $U_{j}=Q_{j+4}$, where $\left(Q_j\right)_{j\geq0}=(0,0,0,1,1,2,4,8,15,\ldots)$ is the sequence of Tetranacci numbers \seqnum{A000078}.

The minimal and maximal values of the output function of the automaton evaluating $\AC_\q$ are $4$ and $16$, respectively. Therefore, the abelian complexity function $\AC_\q$ takes values between $4$ and $16$. However, the output function of the automaton does not attain the value $5$, which implies that there exists no $n\in\N$ such that $\AC_\q(n)=5$. To sum up,
$$
\left\{\AC_\q(n)\;|\; n\in\N\right\}=\{4\}\cup\{6,7,\ldots,16\}.
$$
The existence of gaps in ranges of abelian complexity functions of $m$-bonacci words was already observed a few years ago by K.~B\v{r}inda~\cite{Brinda} on the basis of computer-assisted calculations performed for $m\in\{4,5,\ldots,12\}$. Our automaton confirms his observation in the case $m=4$.

Furthermore, we are able to characterize 
those $n$ for which $\AC_\q(n)=4$. We apply Moore's minimization algorithm on the automaton accepting the set $\{\langle n\rangle_T\;|\;\AC_\q(n)=4\}$, which leads to an automaton having just $6$ states. The transition diagram of the automaton is depicted in Figure~\ref{Autom4.4}.
\begin{figure}
\begin{center}
\begin{tikzpicture}[shorten >=1pt,node distance=2cm,on grid,auto] 
   \node[state,initial] (q_0)   {$q_0$}; 
   \node[state,accepting] (q_1) [right=of q_0] {$q_1$}; 
   \node[state] (q_3) [right=of q_1] {$q_3$}; 
   \node[state,accepting] (q_2) [above=of q_3] {$q_2$}; 
   \node[state,accepting] (q_4) [right=of q_3] {$q_4$}; 
   \node[state,accepting] (q_5) [below=of q_3] {$q_5$}; 
    \path[->] 
    (q_0) edge node {1} (q_1)
          edge [loop above] node {0} ()
    (q_1) edge [bend left] node {0} (q_2)
          edge node {1} (q_3)
    (q_2) edge [bend left] node {0} (q_4)
          edge node {1} (q_3)
    (q_3) edge [out=240,in=210,loop] node {0,1} ()
    (q_4) edge [bend left] node {0} (q_5)
          edge node [swap] {1} (q_3)
    (q_5) edge node [swap] {0} (q_3)
          edge [bend left] node {1} (q_1);
\end{tikzpicture}
\end{center}
\caption{($4$-bonacci word) The minimal automaton accepting the set $\left\{\langle n\rangle_Q\;|\;\AC_\q(n)=4\right\}$.}
\label{Autom4.4}
\end{figure}
One can see directly from its structure that
\begin{equation}\label{AC4}
\AC_\q(n)=4 \qquad\Leftrightarrow\qquad \langle n\rangle_Q \text{ is a prefix of } 100010001000\cdots.
\end{equation}
This result is the $4$-bonacci version of the Tribonacci equivalence~\eqref{AC3}.

Let us proceed to characterization of those $n$ such that  $\AC_\q(n)=6$. We again apply Moore's minimization, and again get an automaton having only $6$ states; see Figure~\ref{Autom4.6}.
\begin{figure}
\begin{center}
\begin{tikzpicture}[shorten >=1pt,node distance=2cm,on grid,auto] 
   \node[state,initial] (q_0)   {$q_0$}; 
   \node[state] (q_1) [right=of q_0] {$q_1$}; 
   \node[state] (q_2) [above right=of q_1] {$q_2$}; 
   \node[state,accepting] (q_3) [below right=of q_1] {$q_3$}; 
   \node[state,accepting] (q_4) [right=of q_3] {$q_4$}; 
   \node[state,accepting] (q_5) [right=of q_2] {$q_{5}$}; 
    \path[->] 
    (q_0) edge node {1} (q_1)
          edge [loop above] node {0} ()
    (q_1) edge node {0} (q_2)
          edge node [swap] {1} (q_3)
    (q_2) edge [loop above] node {0,1} ()
    (q_3) edge node {0} (q_4)
          edge node {1} (q_2)
    (q_4) edge node [swap] {0} (q_5)
          edge node [swap] {1} (q_2)
    (q_5) edge node [swap] {0,1} (q_2);
\end{tikzpicture}
\end{center}
\caption{($4$-bonacci word) The minimal automaton accepting the set $\left\{\langle n\rangle_Q\;|\;\AC_\q(n)=6\right\}$.}
\label{Autom4.6}
\end{figure}
It is obvious from the graph that
$$
\AC_\q(n)=6 \qquad\Leftrightarrow\qquad \langle n\rangle_Q\in\{11,110,1100\},
$$
i.e., the value $6$ is attained only for $n\in\{3,6,12\}$. The fact that the function $\AC_\q$ takes a certain value only finitely many times is remarkable, because it implies a significant qualitative difference between abelian complexity functions of the Tribonacci and the $4$-bonacci word. Recall that each value in the range of $\AC_\t$ is attained infinitely often~\cite{RSZ2,Tu13}.

Now we will comment on the remaining values of $\AC_\q$, i.e., $c\in\{7,\ldots,16\}$. Although one can again construct minimal automata accepting $\{\langle n\rangle_Q\;|\;\AC_\q(n)=c\}$, they are not useful for obtaining a nice characterization of the numbers $n$ such that $\AC_\q(n)=c$. This follows from Table~\ref{Tab.Ac4}:
\begin{table}
\begin{center}
\begin{tabular}{|c||c|c|c|c|c|c|c|c|c|c|c|c|c|}
\hline
$c$ & $4$ & $5$ & $6$ & $7$ & $8$ & $9$ & $10$ & $11$ & $12$ & $13$ & $14$ & $15$ & $16$ \\
\hline
states & $6$ & -- & $6$ & $66$ & $4649$ & $4683$ & $4735$ & $5004$ & $5256$ & $5299$ & $5322$ & $5324$ & $5032$ \\
\hline
\end{tabular}
\end{center}
\caption{($4$-bonacci word) Sizes of minimal automata accepting the sets $\{\langle n\rangle_Q\;|\;\AC_\q(n)=c\}$.}
\label{Tab.Ac4}
\end{table}
the automata are too large, especially for $c\geq8$. Nevertheless, we are still able to demonstrate that each value in the set $\{7,\ldots,16\}$ is attained infinitely many times. For every $c\in\{7,\ldots,16\}$, we give in Table~\ref{Tab.inf} an infinite family of normal $Q$-representations $\langle n\rangle_Q$ such that $\AC_\q(n)=c$.
\begin{table}
\begin{center}
\begin{tabular}{|c|cc||c|cc|}
\hline
$c$ & $\langle n\rangle_Q$ &  & $c$ & $\langle n\rangle_Q$ & \\
\hline
$7$ & $(1000)^j0$ & ($\forall j\geq1$) & $12$ & $10^j1$ & ($\forall j\geq19$) \\
$8$ & $(100)^j$ & ($\forall j\geq3$) & $13$ & $10^j$ & ($\forall j\geq19$) \\
$9$ & $(10)^j$ & ($\forall j\geq11$) & $14$ & $(10000)^j$ & ($\forall j\geq6$) \\
$10$ & $10^j11$ & ($\forall j\geq19$) & $15$ & $(10000)^j0$ & ($\forall j\geq6$) \\
$11$ & $(10)^j0$ & ($\forall j\geq11$) & $16$ & $(10000)^j00$ & ($\forall j\geq6$) \\
\hline
\end{tabular}
\end{center}
\caption{($4$-bonacci word) Examples of infinite families of normal $Q$-representations $\langle n\rangle_Q$ such that $\AC_\q(n)=c$ for $c\in\{7,\ldots,16\}$. The data in the table were obtained by trial and error: we used the automaton evaluating $\AC_\q(n)$ to explore several periodic expansions, some with an aperiodic part at the end, and noted down those expansions that were useful. Many other such examples can be found.}
\label{Tab.inf}
\end{table}

Let us summarize the results obtained on the $4$-bonacci word.

\begin{theorem}
The abelian complexity function of the $4$-bonacci word has the following properties.
\begin{itemize}
\item $\left\{\AC_\q(n)\;|\; n\in\N\right\}=\{4\}\cup\{6,7,\ldots,16\}$.
\item $\AC_\q(n)=4$ if and only if $\langle n\rangle_Q$ is a prefix of $100010001000\cdots$.
\item $\AC_\q(n)=6$ if and only if $n\in\{3,6,12\}$.
\item For every value $c\in\{7,\ldots,16\}$ there exist infinitely many integers $n\in\N$ such that $\AC_\q(n)=c$.
\end{itemize}
\end{theorem}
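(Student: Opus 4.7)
The plan is to run the machinery of Sections~\ref{Sect.Codecomposition}--\ref{Sect.Simplified} on the $4$-bonacci word $\q$ in place of the Tribonacci word, with the only changes being $R=m=4$ in~\eqref{R} and the use of the normal $Q$-representation $\langle n\rangle_Q$ in place of $\langle n\rangle_T$. First I would seed $\Z(1)$ via~\eqref{Z(n)} applied to $\varphi_4$ and then run the BFS from the proof of Theorem~\ref{Subsets} (the pseudocode at the end of Section~\ref{Sect.Z}): starting from $\Z_1:=\Z(1)$ and iterating the operators $\D_0,\D_1$, enumerate all distinct sets $\Z_q$ and all distinct elements $\zeta_j$ that occur. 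The procedure terminates and produces finite tables analogous to Tables~\ref{Tab.Z} and~\ref{Tab.zeta}, from which one reads off the transition function $\delta$ and the output function $\tau(q)=\#\bigcup_{\zeta\in\Z_q}\Vect(\zeta)$. The obvious analogues of Proposition~\ref{Zndelta} and Theorem~\ref{MainThm} then give $\AC_\q(n)=\tau(\delta(0,\langle n\rangle_Q))$, so that $(\AC_\q(n))_{n\in\N}$ is a $Q$-automatic sequence.

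The first bullet becomes a finite verification: after applying Moore's minimization to the resulting DFAO, inspect the image of $\tau$. The claim is that this image is exactly $\{4,6,7,8,\ldots,16\}$; in particular no reachable state has output $5$, while every value in $\{4\}\cup\{6,\ldots,16\}$ is attained by some reachable state.

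For the second and third bullets I would follow Section~\ref{Sect.Simplified} verbatim: for each $c\in\{4,6\}$, form the DFA $A_c=(Q,\Sigma,\delta,0,F_c)$ with $F_c=\{q\,:\,\tau(q)=c\}$ and apply Moore's algorithm. Both minimizations yield $6$-state automata (Figures~\ref{Autom4.4} and~\ref{Autom4.6}), and the two characterizations follow by direct inspection of the transition diagrams. Tracing the accepting paths in Figure~\ref{Autom4.4} from $q_0$ shows that the accepted strings are precisely the nonempty prefixes of $100010001000\cdots$ (after absorbing any leading $0$'s at $q_0$); tracing the accepting paths in Figure~\ref{Autom4.6} yields exactly $\{11,110,1100\}$, which correspond to $n\in\{3,6,12\}$ by Table~\ref{T expansions} transposed to Tetranacci numeration. (Admissibility of the listed strings as normal $Q$-representations is immediate from their form, so no additional check is needed.)

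For the fourth bullet, for each $c\in\{7,\ldots,16\}$ I would exhibit an ultimately periodic family of valid normal $Q$-representations $\langle n_j\rangle_Q$, such as those listed in Table~\ref{Tab.inf}, and feed each family into the DFAO. Since the automaton is finite, the sequence of states visited on any ultimately periodic input is itself ultimately periodic, so it suffices to verify that the state-cycle reached by the periodic tail of each listed family lies on states with $\tau$-value equal to $c$. The main obstacle is computational rather than conceptual: the unreduced DFAO has on the order of $10^4$--$10^5$ states, so the enumeration of the $\Z_q$ and the minimization have to be performed by computer, and finding witnessing families for each $c\geq 7$ is genuinely a search problem that (as the caption of Table~\ref{Tab.inf} admits) is handled by trial and error guided by the automaton itself.
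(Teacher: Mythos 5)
Your proposal matches the paper's own proof essentially step for step: the same re-run of the co-decomposition/BFS machinery with $R=4$ and the Tetranacci numeration, the same inspection of the output function's image for the first bullet, the same Moore-minimized $6$-state automata for $c=4$ and $c=6$, and the same witness families (Table~\ref{Tab.inf}) verified through the automaton for $c\in\{7,\ldots,16\}$. Your explicit remark that one must check the periodic tail of each witness family lands on a state cycle with the correct $\tau$-value is a slightly more careful phrasing of what the paper leaves implicit, but it is not a different argument.
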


We are convinced that the existence of gaps in the range of the abelian complexity function, as well as the existence of values that are attained only finitely many times, are common properties of all $m$-bonacci words with $m\geq4$.

We finish the section by a remark on the minimal value of the abelian complexity function of the $m$-bonacci word for a general $m$.
Let $\u$ be an $m$-bonacci word. One can easily show that for every $n\in\N$ and $\ell\in\{0,1,\ldots,m-1\}$, the factor $\ell\u_{[n-1]}$ is a factor of $\u$, thus $\AC_\u(n)\geq m$. At the same time we have $\AC_\u(1)=m$. To sum up, $\min_{n\in\N}\AC_\u(n)=m$ for any $m$-bonacci word.
Results of K.~B\v{r}inda's calculations~\cite{Brinda} together with proven formulas~\eqref{AC3} and \eqref{AC4} suggest a conjecture on a precise characterization of the numbers for which the abelian complexity function of an $m$-bonacci word $\u$ attains its minimum:
\begin{equation}\label{ACm}
\AC_\u(n)=m \qquad\Leftrightarrow\qquad \langle n\rangle_U \text{ is a prefix of } 10^{m-1}10^{m-1}10^{m-1}\cdots;
\end{equation}
the symbol $\langle n\rangle_U$ stands here for the $m$-bonacci representation of $n$. We are able to prove the implication $\Leftarrow$ in~\eqref{ACm} for a general $m$ by the abelian co-decomposition method, introduced earlier~\cite{Tu13}. The implication $\Rightarrow$ remains so far open, although we expect that it is probably not difficult to be proven either.

\section{Conclusions and generalizations}\label{Sect.Conclusions}

In this paper we focused on the abelian complexity of the Tribonacci word (or, more generally, $m$-bonacci words), but the method can be easily adapted for application on any simple Parry word. Let us consider the fixed point $\u$ of a substitution~\eqref{simpleParry}. The calculation naturally takes advantage of the numeration system associated with $\varphi$, i.e., of the normal $U$-representation for $U_j=|\varphi^j(0)|$. Let us briefly sketch the procedure. First of all, for the sake of generality, we slightly reformulate the definition of $\Codec\bigl(\begin{smallmatrix}v\\ w\end{smallmatrix}\bigr)$ by imposing an additional technical assumption on the decomposition~\eqref{rozklad}. Namely, we assume that for every $j\in\{1,\ldots,h\}$, the factor $\tilde{z}$ has the prefix $0$, and require that $h$ is maximal subject to this condition. We also need to introduce maps $\D_0,\D_1,\ldots,\D_{\alpha_0}$, defined in a way analogous to equations~\eqref{D0D1},
i.e.,
$$
\D_j(\zeta):=\Codec\begin{pmatrix}\varphi(z)\\ 0^{-j}\varphi(\tilde{z})0^j\end{pmatrix} \qquad\text{for }\zeta=\binom{z}{\tilde{z}}\,,\; j\in\{0,1,\ldots,\alpha_0\}\,.
$$
The search for sets $\Z_1,\ldots,\Z_M$ starts with calculating $\Z_\u(n)$ by formula~\eqref{Z(n)} for every $n\in\{1,\ldots,\alpha_0\}$. Then we take the bunch of sets $\Z_\u(1),\ldots,\Z_\u(\alpha_0)$, which we denote by $\Z_1,\ldots,\Z_{\alpha_0}$, and apply the maps $\D_0,\D_1,\ldots,\D_{\alpha_0}$ on each of the sets, similarly as in the proof of Theorem~\ref{Subsets}. In this way we obtain a new bunch of sets, we apply $\D_0,\D_1,\ldots,\D_{\alpha_0}$ on them again, and so forth. However, unlike in the case of $m$-bonacci words, one needs to keep track of the admissibility of the normal $U$-representations $\langle n\rangle_U$ during the calculation. Briefly speaking, if a normal $U$-representation, examined at a given moment, cannot be validly extended by a certain specific digit $d$, then the map $\D_d$ is not applied at that stage. The procedure ends when the application of $\D_0,\D_1,\ldots,\D_{\alpha_0}$ generates no new data.

The abelian co-decomposition method also allows us to deal with the other type of Parry words, called non-simple Parry words, which are fixed points of substitutions
$$
\begin{array}{lccl}
\varphi: & 0 & \mapsto & 0^{\alpha_0}1 \\
& 1 & \mapsto & 0^{\alpha_1}2 \\
&  & \vdots & \\
& m & \mapsto & 0^{\alpha_{m}}(m+1) \\
&  & \vdots & \\
& m+p-2 & \mapsto & 0^{\alpha_{m+p-2}}(m+p-1) \\
& m+p-1 & \mapsto & 0^{\alpha_{m+p-1}}m
\end{array}
$$
where $\alpha_j$ satisfy $\alpha_0\geq1$, $\alpha_\ell\leq\alpha_0$ for all $\ell\in\A$, and $(\exists\ell\in\{m,m+1,\ldots,m+p-1\})(\alpha_\ell\geq1)$.
Although we have not explicitly discussed non-simple Parry words in previous sections, the implementation of the procedure would be the same; we just need to replace the value $m$ in the definition of $R$~\eqref{R} by $m+p$. To sum up, the approach is applicable on any Parry word; but one shall keep in mind that in practice it will more likely work well in cases when the image of the abelian complexity function is a set of low cardinality. Nevertheless, it can still give new results for various words for which other methods fail.

Those Parry words, for which this approach turns out to be inefficient, can be perhaps treated by a newer technique, which replaces pairs $\binom{z}{\tilde{z}}$ by certain conveniently chosen triples~\cite{Tu15}. That technique is more involved, but it is expected to have smaller memory requirements in most cases and to work faster.

A potentially interesting question is whether this approach (possibly after a certain improvement) can be used for dealing with a word that depends on a parameter, i.e., whether one can explore a parametric family of words as a whole. Consider for instance the $m$-bonacci word for a general $m\geq2$. We are convinced that the procedure could be implemented with a parameter as well, although the calculation would be of course intricate and lengthy.

The procedure also gives, as a by-product, the optimal balance bound of the examined word. The optimal bound is equal to the maximal entries of vectors $\psi$ having the form $\psi=v_i-v_j$ for $v_i,v_j$ belonging to the same set $\bigcup_{\zeta\in\Z_q}\Vect(\zeta)$. For example, one can check in this way that the $4$-bonacci word is $3$-balanced. Consequently, we can regard the method not only as a tool for evaluating abelian complexity, but also as a tool for exploring balance properties of words. In particular, it is possible that this approach can lead to the optimal balance bound for the $m$-bonacci word for any $m$. Recall that the optimal bound for the $m$-bonacci word is not known yet, despite the fact that an upper bound has been already determined~\cite{BPT13}.

\section{Acknowledgements}

The author thanks J.-P. Allouche for useful comments and suggestions, and the referees for valuable hints and corrections that helped to improve the manuscript.

\bigskip
\hrule
\bigskip

\noindent 2010 {\it Mathematics Subject Classification}:
Primary 11B85; Secondary 68R15.

\noindent \emph{Keywords: } abelian complexity, Tribonacci word, finite automaton, $4$-bonacci word.

\bigskip
\hrule
\bigskip

\noindent (Concerned with sequences
\seqnum{A000073},
\seqnum{A000078},
\seqnum{A080843},
\seqnum{A216190},
\seqnum{A254990}, and
\seqnum{A255014}.)

\bigskip
\hrule
\bigskip

\vspace*{+.1in}
\noindent
Received October 7 2014;
revised version received February 12 2015.
Published in {\it Journal of Integer Sequences},
February 14 2015.

\bigskip
\hrule
\bigskip

\noindent
Return to
\htmladdnormallink{Journal of Integer Sequences home page}{http://www.cs.uwaterloo.ca/journals/JIS/}.
\vskip .1in

\end{document}